\documentclass[reqno,12pt]{amsart}

\usepackage{amsmath,amssymb,amsfonts,amscd,amsthm}
\usepackage{mathrsfs,mathabx}
\usepackage{latexsym,graphicx,cite,cases,array,booktabs}
\usepackage{color}
\definecolor{rot}{rgb}{1.000, 0.000, 0.000}

\setlength{\topmargin}{-1.5cm}
\setlength{\oddsidemargin}{0.0cm}
\setlength{\evensidemargin}{0.0cm}
\setlength{\textwidth}{16.5cm}
\setlength{\textheight}{22cm}
\headheight 20pt
\headsep    26pt
\footskip 0.4in

\widowpenalty=10000

\newtheorem{theorem}{Theorem}

\newtheorem{lemma}[theorem]{Lemma}

\newtheorem{remark}[theorem]{Remark}

\numberwithin{equation}{section}

\title[Stability for a Borg--Levinson theorem]{Stability for the multi-dimensional Borg--Levinson theorem of the biharmonic operator}

\author[P. Li]{Peijun Li}
\address{Department of Mathematics, Purdue University, West Lafayette, Indiana
47907, USA}
\email{lipeijun@math.purdue.edu}

\author[X. Yao]{Xiaohua Yao}
\address{School of Mathematics and Statistics, Central China Normal University,
Wuhan 430079, China}
\email{yaoxiaohua@mail.ccnu.edu.cn}

\author[Y. Zhao]{Yue Zhao}
\address{School of Mathematics and Statistics, Central China Normal University,
Wuhan 430079, China}
\email{zhaoyueccnu@163.com}


\subjclass[2000]{31B20, 35R30, 58J50}

\keywords{biharmonic operator, resolvent estimate, Weyl-type law, inverse spectral problem, H\"older stability}

\begin{document}

\begin{abstract}

In this paper, we prove a conditional H\"older stability estimate for the inverse spectral problem of the biharmonic operator. The proof employs the resolvent estimate and a Weyl-type law for the biharmonic operator which were obtained by the authors 
in \cite{LYZ}. This work extends nontrivially the result in \cite{stefanov} from the second order Schr\"{o}dinger operator to the fourth order biharmonic operator. 

\end{abstract}

\maketitle

\section{Introduction}

The topic of meromorphic continuation of the outgoing resolvent and related resolvent estimates for elliptic operators is  central in scattering theory (see e.g. \cite{DJ18, Dy15a, Zw17}). 
Physically, the poles of the meromorphic continuation are closely related to the scattering resonances, which appear in many research areas of mathematics, physics, and engineering. We refer to the monograph \cite{Dyatlov} for a comprehensive introduction to mathematical theory of this subject. Recently, the stability estimates for the inverse source problems were obtained in \cite{LYZ, LZZ} by using the holomorphic domain and an upper bound for the resolvent of the elliptic operator. Another application can be found in \cite{Cakoni} for a study on the  duality between scattering poles and transmission eigenvalues in scattering theory. To further explore the applications of the scattering theory to other topics in the field of inverse problems, in this paper, we intend to study an inverse spectral problem for the biharmonic operator. The inverse spectral problem may be considered as an inverse boundary value problem. As a representative example, a fundamental work can be found in \cite{Uhlmann} on the Calder\'{o}n problem where the scattering theory played a crucial role. 

\vskip0.15cm

We briefly review the existing literature on the inverse spectral problem for the Schr\"{o}dinger operator. The classical one-dimensional inverse spectral problem was studied in \cite{borg, levinson}. A uniqueness result was established in \cite{NSU} for the multi-dimensional problem by representing the Dirichlet-to-Neumann (DtN) map in terms of the spectral data. The uniqueness of the inverse spectral problem with partial spectral data was discussed in \cite{Isozaki}.
For inverse spectral problems on Riemannian manifolds and in a periodic waveguide, we refer the reader to \cite{BK, Kurylev, KKL, Kian}. Stability of the inverse spectral problems was addressed in \cite{AS, stefanov}. Recent developments on numerical methods can be found in \cite{BXZ, XZ} for the one-dimensional inverse spectral problems. 

\vskip0.15cm

Since there is already a vast amount of literature on the inverse spectral problems for the Schr\"{o}dinger operator, we wish to extend the results to higher order elliptic operators. The inverse problems of biharmonic operators have significant applications in various areas including the theory of vibration of beams, the hinged plate configurations and the scattering by grating stacks \cite{GGS, MMMP}. We refer the reader to \cite{ikehata, isakov} for some uniqueness results of the inverse problems of higher order elliptic operators. In \cite{katya}, the uniqueness with full or incomplete spectral data was studied for the elliptic operators of higher order with constant coefficients. However, to the best of our knowledge, there is no stability estimate so far for the inverse spectral problem of the elliptic operators of higher order. 

\vskip0.15cm

This work is motivated by \cite{AS, Isozaki, stefanov}, which were concerned with the inverse spectral problem of determining the potential function of the Schr\"odinger operator from the spectral data consisting of the eigenvalues and normal derivatives of the eigenfunctions on the boundary. In \cite{Isozaki}, the author showed that even if a finite number of spectral data is unavailable, the potential can still be uniquely determined. The proof utilized an idea of the Born approximation in scattering theory. 
A stability theorem for the inverse spectral problem was obtained in \cite{AS} by using partial spectral data. The approach was to connect the hyperbolic DtN map associated with a hyperbolic equation with the DtN map of the stationary Schr\"odinger operator. The proof of the stability estimate was built upon \cite{Rakesh}, which studied an inverse problem for the wave equation by hyperbolic DtN map. Based on \cite{AS, Isozaki}, the authors proved in \cite{stefanov} the uniqueness result \cite[Theorem 2.1]{stefanov} by assuming that the spectral data are only known asymptotically for the Schr\"odinger operator. Moreover, a H\"older stability estimate was obtained in \cite[Theorem 2.2]{stefanov}, which assumes that a finite number of spectral data is not available. The proof of \cite[Theorem 2.2]{stefanov} combines the crucial integral identity introduced in \cite[Lemma 2.2]{Isozaki} and the method used in \cite{AS}. We also point out that the proofs in \cite{Isozaki, stefanov} rely on the resolvent estimate for the Schr\"odinger operator and a Weyl-type law is crucial in the proof of the stability estimate.

\vskip0.15cm

Recently, we proved an increasing stability estimate for the inverse source problem of the biharmonic operator \cite{LYZ}. Meanwhile, we obtained the resolvent estimate and a Weyl-type inequality for the biharmonic operator. As a consequence, we hope to extend the results in \cite{AS, Isozaki, stefanov} from the Schr\"odinger operator to the biharmonic operator. Clearly, the extension is nontrivial. Compared with the elliptic operators of second order, the biharmonic operator is more sophisticated. For instance, it is required to investigate two sets of the DtN maps and use more spectral data in order to study the inverse problems of the biharmonic operator. Moreover, the resolvent set and resolvent estimate of the biharmonic operator differ significantly from 
the Schr\"odinger operator. As pointed out in \cite{May}, the methods for the second order equations may not work for 
higher-order equations. The solutions of higher-order equations have more complicated properties. In this work, we prove a conditional H\"{o}lder stability for the inverse spectral problem of the biharmonic operator. The proof is based on a combination of an Isozaki's representation formula (cf. Lemma \ref{Isozaki_bi}) and a Weyl-type law of the Dirichlet eigenvalue problem for the biharmonic operator with a potential (cf. Lemma \ref{eigenfunction_est1}). 

\vskip0.15cm

Next we introduce some notations and state the main result of this paper. 

\vskip0.15cm

Let $B_R = \{x\in\mathbb R^n ~:~ |x|< R\}$, where $n\geq 3$ is odd and $R>0$ is a constant. Denote by $\partial B_R$ the boundary of $B_R$. We consider the eigenvalue problem with the Navier boundary condition
\[
\begin{cases}
(\Delta^2 + V) \phi_k=\lambda_k\phi_k&\quad\text{in }B_R,\\
\Delta \phi_k = \phi_k=0&\quad\text{on }\partial B_R,
\end{cases}
\]
where $\{\lambda_j, \phi_j\}_{j=1}^\infty$ denotes the positive increasing eigenvalues and orthonormal eigenfunctions. 

\vskip0.15cm

Hereafter, the notation $a\lesssim b$ stands for $a\leq Cb,$ where $C>0$ is a generic constant which may change step by step in the proofs. The following Weyl-type law for the biharmonic operator with a potential given in Lemma \ref{eigenfunction_est1} is crucial in the proof of the stability:
\begin{align}\label{weyl}
 |\lambda_k| \sim k^{4/n}, \quad \|\partial_\nu\phi_k\|_{L^2(\partial B_R)}\lesssim k^{2/n}, \quad \|\partial_\nu(\Delta\phi_k)\|_{L^2(\partial B_R)}\lesssim k^{4/n},
\end{align}
where $\nu$ is the unit outward normal vector to $\partial B_R$.  We mention that the Weyl-type law \eqref{weyl} for the biharmonic operator was proved in \cite{LYZ} by using an argument of commutator, which would yield a sharper result than using only the standard elliptic regularity theory for the Schr\"odinger operator \cite[Lemma 2.5]{AS}. 
Consequently, this sharper Weyl-type law \eqref{weyl} leads to a better stability estimate for the inverse spectral problem of the biharmonic operator.

Consider an integer $m$ such that
\[
m>n/4 + 1.
\]
It follows from \eqref{weyl} that both the series 
\[
\sum_{k\geq 1}k^{-4m/n} \|\partial_\nu\phi_k\|_{L^2(\partial B_R)} \quad \text{and}\quad \sum_{k\geq 1}k^{-4m/n} \|\partial_\nu(\Delta\phi_k)\|_{L^2(\partial B_R)}
\] 
converge absolutely in $L^2(\partial B_R)$. 

\vskip0.15cm

For two potential functions $V_1, V_2\in L^\infty(B_R)$, we denote the positive increasing eigenvalues and orthonormal eigenfunctions of $V_1$ and $V_2$
by $\{\lambda^{(1)}_j, \phi^{(1)}_j\}_{j=1}^\infty$ and $\{\lambda^{(2)}_j, \phi^{(2)}_j\}_{j=1}^\infty$, respectively. Let $E\geq 0$ be any fixed integer and define
the spectral data discrepancy by
\begin{align*}
\varepsilon_0 &= \max_{k\geq 1} |\lambda^{(1)}_{k + E} - \lambda^{(2)}_{k + E}|,\\
\varepsilon_1 &= \sum_{k\geq 1} k^{-4m/n} \|\partial_\nu\phi^{(1)}_{k + E} - \partial_\nu\phi^{(2)}_{k + E}\|_{L^2(\partial B_R)},\\
\varepsilon_2 &= \sum_{k\geq 1}k^{-4m/n} \|\partial_\nu(\Delta\phi^{(1)}_{k + E}) - \partial_\nu(\Delta\phi^{(2)}_{k + E})\|_{L^2(\partial B_R)}.
\end{align*}

\vskip0.15cm

The following theorem concerns the stability of the inverse problem and is the main result of the paper.

\begin{theorem}\label{main}
For $V_1, V_2\in L^\infty(B_R)$ satisfying $V:= V_1 - V_2\in H_0^1(B_R)$ and 
\[
\|V_1\|_{L^\infty(B_R)} + \|V_2\|_{L^\infty(B_R)} + \|V\|_{H_0^1(B_R)} \leq Q,
\]
there exist two constants $C = C(m, Q, n)$ and $0<\delta<1$ such that
\begin{align}\label{stability}
\|V_1 - V_2\|_{L^2(B_R)}\leq C\varepsilon^\delta,
\end{align}
where $\varepsilon = \varepsilon_0 + \varepsilon_1+ \varepsilon_2$.
\end{theorem}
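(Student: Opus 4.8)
The plan is to convert the inverse spectral problem into a Born approximation through the Isozaki-type representation formula of Lemma \ref{Isozaki_bi}, and to recover the Fourier transform $\widehat{V}(\xi)$ of the extension by zero of $V = V_1 - V_2$ from the boundary spectral data, frequency by frequency, with an error that decays in the spectral parameter. The feature that upgrades the classical logarithmic rate to the H\"older rate \eqref{stability} is to probe each frequency $\xi$ by \emph{oscillating} free solutions at the matching high energy, rather than by the exponentially growing complex geometrical optics solutions of the Calder\'on problem. Concretely, for $\lambda = \mu^{4}$ with $\mu$ large and $\lambda$ chosen in the resolvent region of both operators (possible since the spectra are discrete), I would use $u_{j}(x) = e^{\mathrm{i}x\cdot\eta_{j}}$ with $|\eta_{1}| = |\eta_{2}| = \mu$, so that $(\Delta^{2} - \lambda)u_{j} = 0$ and the product sweeps out a pure exponential $e^{\mathrm{i}x\cdot\xi}$ over all $|\xi| \le 2\mu$, while the boundary traces of $u_{j}$ stay bounded (only polynomially large in $\mu$ after normal derivatives). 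Because the data run over all eigenvalues $\lambda_{k}\to\infty$, this high-energy probing is legitimate and is exactly what removes the exponential amplification.

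First I would make precise the passage from the spectral data to the two Dirichlet-to-Neumann maps of the Navier problem for the fourth-order operator, which send $(u,\Delta u)|_{\partial B_{R}} \mapsto (\partial_{\nu}u,\partial_{\nu}\Delta u)|_{\partial B_{R}}$; this is the structural reason that both families $\partial_{\nu}\phi_{k}$ and $\partial_{\nu}(\Delta\phi_{k})$, and hence both $\varepsilon_{1}$ and $\varepsilon_{2}$, are needed. Lemma \ref{Isozaki_bi} expresses the boundary pairing built from these maps as an absolutely convergent series over the spectral data carrying weights comparable to $k^{-4m/n}$; this is precisely the role of the integer $m > n/4 + 1$, which by the Weyl-type law \eqref{weyl} of Lemma \ref{eigenfunction_est1} guarantees the convergence recorded after \eqref{weyl} (the critical series being the one built from $\partial_{\nu}(\Delta\phi_{k})$). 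Splitting the difference of the $V_{1}$- and $V_{2}$-pairings into the finitely many unavailable terms $k \le E$ and the tail $k > E$, Isozaki's idea gives that the finite part is $O(\mu^{-4})$ as $\lambda = \mu^{4}\to\infty$ and hence harmless, whereas the tail is estimated termwise by $\varepsilon_{0},\varepsilon_{1},\varepsilon_{2}$. The outcome is a bound of the form $\mu^{\beta}\varepsilon$ for a fixed power $\beta$, \emph{with no exponential factor}.

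Next I would insert the oscillating solutions into the representation formula to obtain, for each $|\xi| \le 2\mu$, an identity
\[
\widehat{V}(\xi) \;=\; \big\langle (\Lambda_{V_{1}}(\lambda) - \Lambda_{V_{2}}(\lambda)) f_{1}, f_{2}\big\rangle \;+\; \mathcal{R},
\]
where $f_{j}$ are the boundary traces of $u_{j}$ and $\mathcal{R}$ is the Born remainder produced by replacing the exact solution of the $(\Delta^{2}+V_{1}-\lambda)$-equation by its free counterpart, i.e.\ the Lippmann--Schwinger correction $-R_{V_{1}}(\lambda)(V_{1}u_{1})$ with $R_{V_{1}}(\lambda) = (\Delta^{2}+V_{1}-\lambda)^{-1}$. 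Using the resolvent estimate of \cite{LYZ}, whose operator norm decays in $\mu$ on the appropriate weighted spaces, together with $\|V_{j}\|_{L^{\infty}} \le Q$, gives $|\mathcal{R}| \lesssim Q^{2}\mu^{-\alpha}$ for some fixed $\alpha > 0$. Combining this with the previous paragraph yields the pointwise estimate
\[
|\widehat{V}(\xi)| \;\lesssim\; \mu^{\beta}\varepsilon + \mu^{-\alpha}, \qquad |\xi| \le 2\mu .
\]

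Finally I would reconstruct $V$ by Fourier inversion, splitting $\|V\|_{L^{2}(B_{R})}^{2}$ at the cutoff $\rho = 2\mu$. The low frequencies $|\xi| \le \rho$ are controlled by the pointwise estimate above, while the high frequencies $|\xi| > \rho$ are controlled by the conditional a priori bound $V\in H_{0}^{1}(B_{R})$ through $\int_{|\xi|>\rho}|\widehat{V}|^{2} \lesssim \rho^{-2}\|V\|_{H^{1}}^{2} \le \rho^{-2}Q^{2}$, which is where the hypothesis $\|V\|_{H_{0}^{1}(B_{R})} \le Q$ of Theorem \ref{main} enters. This leads to
\[
\|V\|_{L^{2}(B_{R})}^{2} \;\lesssim\; \rho^{\,n+2\beta}\varepsilon^{2} + \rho^{-2\gamma}Q^{2},
\]
with $\gamma > 0$ collecting the decaying contributions, and choosing $\rho$ to be a suitable negative power of $\varepsilon$ balances the two terms and produces $\|V\|_{L^{2}(B_{R})} \le C\varepsilon^{\delta}$ with $\delta = \gamma/(n/2 + \beta + \gamma)$, which is \eqref{stability}. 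I expect the main obstacle to be the combination of the second and third steps for the fourth-order operator: constructing the biharmonic analogues of the special solutions and, above all, tracking the precise polynomial power $\beta$ that the spectral-data tail costs, so that it remains polynomial in $\mu$ (never exponential) and is beaten in the final optimization by the $H^{1}$ a priori bound. It is exactly here that the sharp Weyl-type law \eqref{weyl} of \cite{LYZ}, rather than the cruder elliptic-regularity bound used in \cite{AS}, is decisive for obtaining the H\"older exponent $\delta$.
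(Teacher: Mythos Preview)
Your overall architecture is right --- Isozaki's identity (Lemma~\ref{Isozaki_bi}) to produce $\widehat V(\xi)$ plus a Born remainder, the resolvent bound of \cite{LYZ} to kill the remainder, the $H^1$ a~priori bound to handle $|\xi|$ large, and an optimization in the spectral parameter --- and you correctly pinpoint why the rate is H\"older rather than logarithmic. The gap is in the step where you pass from the DtN maps to the data $\varepsilon_0,\varepsilon_1,\varepsilon_2$.

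You write that Lemma~\ref{Isozaki_bi} ``expresses the boundary pairing \dots\ as an absolutely convergent series over the spectral data carrying weights comparable to $k^{-4m/n}$; this is precisely the role of the integer $m$.'' This is not correct: $S(\omega,\theta)$ is built from the \emph{undifferentiated} maps $\Lambda_1(\lambda),\Lambda_2(\lambda)$, whose spectral expansion has weights $(\lambda_k-\lambda)^{-1}\sim k^{-4/n}$, and the paper explicitly notes that these series need not converge absolutely (see the remark preceding Lemma~\ref{derivative}). With such weights a termwise comparison of the tail $k>E$ produces quantities like $\sum_{k>E}k^{-4/n}\|\partial_\nu\phi_k^{(1)}-\partial_\nu\phi_k^{(2)}\|_{L^2(\partial B_R)}$, which is \emph{not} controlled by $\varepsilon_1=\sum_{k\geq1}k^{-4m/n}\|\cdots\|$ once $m>1$. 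So your claim that ``the tail is estimated termwise by $\varepsilon_0,\varepsilon_1,\varepsilon_2$'' with outcome $\mu^{\beta}\varepsilon$ does not follow; the integer $m$ never appears in Lemma~\ref{Isozaki_bi}.

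The missing device is the Alessandrini--Sylvester Taylor trick. For an auxiliary shift $\tilde\lambda=\lambda+T$, $T>0$, one writes
\[
\tilde\Lambda_{\alpha,d}(\lambda)=\sum_{j=0}^{m-1}\frac{(\lambda-\tilde\lambda)^{j}}{j!}\,\tilde\Lambda^{(j)}_{\alpha,d}(\tilde\lambda)+R_{\alpha,d}(\lambda),
\]
and treats the two pieces by entirely different mechanisms. The Taylor polynomial is \emph{not} compared to spectral data at all: Lemma~\ref{ddtn} (proved by PDE regularity and interpolation, not via any spectral series) gives $\|\Lambda_{1,d}^{(j)}-\Lambda_{2,d}^{(j)}\|\lesssim|\tilde\lambda|^{-(j+\sigma)}$, so this part contributes $O(T^{-\sigma})$. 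Only the integral remainder involves $\tilde\Lambda^{(m)}$, which by Lemma~\ref{derivative} \emph{is} an absolutely convergent series with weights $(\lambda_k-\beta)^{-(m+1)}$; after a further $\lambda$-dependent truncation $E<k\le E(\lambda)$ versus $k>E(\lambda)$, this is where the $k^{-4m/n}$ factors --- and hence $\varepsilon_0,\varepsilon_1,\varepsilon_2$ --- legitimately enter. One then optimizes in $T$ before the final optimization in $\zeta$. Without this Taylor step and Lemma~\ref{ddtn}, the polynomial-in-$\mu$ bound you need on $\|\Lambda_{1,d}-\Lambda_{2,d}\|$ in terms of $\varepsilon$ cannot be obtained from the data as defined.
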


The assumption $V:= V_1 - V_2\in H_0^1(B_R)$ will be used to control the high frequency tail of the Fourier transform of $V$.
This is a commonly used argument in the study of the inverse problems (cf. \cite[Proof of Proposition 1]{Ales}, \cite[$(4.3)$]{Isakov1}, \cite{LYZ}).

\vskip0.15cm

The above result extends \cite[Theorem 2.2]{stefanov} from the Schr\"odinger operator to the biharmonic operator. It can be seen from \eqref{stability} that even if a finite number of spectral data is not available, the conditional H\"{o}lder stability can still be obtained, which clearly implies the uniqueness of the inverse spectral problem. Compared with \cite[Theorem 2.2]{stefanov}, the analysis of the biharmonic operator is more involved. Specifically, it is required to investigate two sets of the DtN maps and use more spectral data in order to study the inverse problems of the biharmonic operator. As a result, we must extend the crucial integral identity presented in \cite[Lemma 2.2]{Isozaki} and several important lemmas proved in \cite{AS} from the Schr\"odinger operator to the biharmonic operator. The extensions require the  Weyl-type inequality \eqref{weyl} and the resolvent estimate for the biharmonic operator which were proved in \cite{LYZ}.

\vskip0.15cm

The paper is organized as follows. The two sets of DtN maps are introduced in Section \ref{DtN maps}. 
Section \ref{proof} is devoted to the proof of the stability. In Appendix, we present the estimates of the resolvent and a Weyl-type law for the biharmonic operator.

\section{The DtN maps}\label{DtN maps}

In this section, we consider two families of the DtN maps and study their mapping properties. Let $V\in L^\infty(B_R)$ and $\lambda\notin \{\lambda_k\}_{k=1}^\infty$. 
Given any $f\in H^{3/2}(\partial B_R)$ and $g\in H^{-1/2}(\partial B_R)$, consider the boundary value problem
\begin{align}\label{eqn}
\begin{cases}
H_V u - \lambda u = 0 &\quad \text{in}\, B_R,\\
u = f &\quad \text{on}\,\partial B_R,\\
\Delta u = g &\quad \text{on}\,\partial B_R,
\end{cases}
\end{align}
where $H_V=\Delta^2+V$. Clearly, it has a unique weak solution $u\in H^2(B_R)$. We introduce two DtN maps
\begin{align*}
\Lambda_1(\lambda)&: f \rightarrow \partial_\nu u\vert_{\partial B_R},\\
\Lambda_2(\lambda)&: g\rightarrow \partial_\nu (\Delta u)\vert_{\partial B_R}, 
\end{align*}
where $\Lambda_1(\lambda)$ and $\Lambda_2(\lambda)$ define bounded operators from $H^{3/2}(\partial B_R)$ to $H^{1/2}(\partial B_R)$ and from $H^{-1/2}(\partial B_R)$ to $H^{-3/2}(\partial B_R)$, respectively. 

\vskip0.15cm

Next, we derive formal representations of $\Lambda_1(\lambda)$ and $\Lambda_2(\lambda)$ by using the spectral data.
Multiplying both sides of \eqref{eqn} by $\phi_k$ and using the integration by parts, we have 
\[
\int_{B_R} u \phi_k {\rm d}x =  \frac{1}{\lambda_k - \lambda}  
\Big(\int_{\partial B_R} \partial_\nu\phi_k f {\rm d}s(y) + \int_{\partial B_R} \partial_\nu(\Delta\phi_k)g{\rm d}s(y)\Big),
\]
which formally gives
\[
u(x, \lambda) = \sum_{k=1}^\infty \phi_k \frac{1}{\lambda_k - \lambda}  
\Big(\int_{\partial B_R} \partial_\nu\phi_k f{\rm d}s(y) + \int_{\partial B_R} \partial_\nu(\Delta\phi_k)g{\rm d}s(y)\Big), \quad x\in B_R.
\]
Thus, for $\lambda\notin \{\lambda_k\}_{k=1}^\infty$, the DtN maps can be represented by 
\begin{align*}
\Lambda_1(\lambda)(f, g) &= \sum_{k = 1}^\infty \partial_\nu\phi_k\Big\vert_{\partial B_R} \frac{1}{\lambda_k - \lambda} 
\Big(\int_{\partial B_R} \partial_\nu\phi_k f{\rm d}s(y) + \int_{\partial B_R} \partial_\nu(\Delta\phi_k)g{\rm d}s(y)\Big)
\end{align*}
and
\begin{align*}
\Lambda_2 (\lambda)(f, g) &= \sum_{k = 1}^\infty \partial_\nu(\Delta\phi_k)\Big\vert_{\partial B_R} \frac{1}{\lambda_k - \lambda}
\Big( \int_{\partial B_R} \partial_\nu\phi_k f{\rm d}s(y) + \int_{\partial B_R} \partial_\nu(\Delta\phi_k)g{\rm d}s(y)\Big).
 \end{align*}
 
 \vskip0.15cm
 
 However, the series on the right hand side may not converge absolutely. It was shown in \cite[Lemma 2.6]{AS} that some higher order formal derivatives converge absolutely. Let 
 \begin{align*}
 \Lambda_1^{(m)}(\lambda):= \frac{{\rm d}^m}{{\rm d}\lambda^m} \Lambda_1(\lambda),\quad 
 \Lambda_2^{(m)}(\lambda):= \frac{{\rm d}^m}{{\rm d}\lambda^m} \Lambda_2(\lambda).
 \end{align*}
 By the Weyl-type law \eqref{weyl}, for $m\gg 1$, the above two series converge absolutely.  Precisely, we have the following lemma.
 
 \begin{lemma}\label{derivative}
 For $m>n/4 + 1$ and $\lambda\notin \{\lambda_k\}_{k=1}^\infty$, the series
 \begin{align*}
 \Lambda^{(m)}_1(\lambda)(f,g)&= -m!\sum_{k = 1}^\infty \partial_\nu\phi_k\Big\vert_{\partial B_R} \frac{1}{(\lambda_k - \lambda)^{m + 1}} 
\Big(\int_{\partial B_R} \partial_\nu\phi_k f{\rm d}s(y) \\
&\quad + \int_{\partial B_R} \partial_\nu(\Delta\phi_k)g{\rm d}s(y)\Big)
\end{align*}
and
\begin{align*}
\Lambda^{(m)}_2(\lambda)(f,g)&= -m!\sum_{k = 1}^\infty \partial_\nu(\Delta\phi_k)\Big\vert_{\partial B_R} \frac{1}{(\lambda_k - \lambda)^{m + 1}}
\Big( \int_{\partial B_R} \partial_\nu\phi_k f{\rm d}s(y) \\
&\quad+ \int_{\partial B_R} \partial_\nu(\Delta\phi_k )g{\rm d}s(y)\Big),
 \end{align*}
converge absolutely in $H^{1/2}(\partial B_R)$ and $H^{-3/2}(\partial B_R)$, respectively. 
Moreover, $\Lambda^{(m)}_1(\lambda)$ and $\Lambda^{(m)}_2(\lambda)$ can be extended to meromorphic families with poles at the eigenvalues.
 \end{lemma}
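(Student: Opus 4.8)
The plan is to reduce the whole statement to the scalar decay of $(\lambda_k-\lambda)^{-(m+1)}$ competing against the polynomial growth supplied by the Weyl-type law \eqref{weyl}, and to justify term-by-term differentiation \emph{a posteriori} from the uniform convergence of the differentiated series. First I would record the elementary identity
\[
\Big|\frac{{\rm d}^m}{{\rm d}\lambda^m}\frac{1}{\lambda_k-\lambda}\Big|=\frac{m!}{|\lambda_k-\lambda|^{m+1}},
\]
so that differentiating the series for $\Lambda_1(\lambda)$ and $\Lambda_2(\lambda)$ term by term produces exactly the two displayed expressions; the legitimacy of this differentiation then follows once the differentiated series is shown to converge locally uniformly in $\lambda$.

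Next I would estimate a single term. By the Cauchy--Schwarz inequality the scalar factor obeys
\[
\Big|\int_{\partial B_R}\partial_\nu\phi_k\, f\,{\rm d}s+\int_{\partial B_R}\partial_\nu(\Delta\phi_k)\, g\,{\rm d}s\Big|\lesssim \|\partial_\nu\phi_k\|_{L^2(\partial B_R)}\|f\|_{L^2(\partial B_R)}+\|\partial_\nu(\Delta\phi_k)\|_{H^{1/2}(\partial B_R)}\|g\|_{H^{-1/2}(\partial B_R)},
\]
the second product being the duality pairing between $H^{1/2}$ and $H^{-1/2}$. To control the boundary traces in the norms dictated by the target spaces ($H^{1/2}$ for $\Lambda_1^{(m)}$ and $H^{-3/2}$ for $\Lambda_2^{(m)}$), I would combine the trace theorem with the elliptic and energy estimates $\|\phi_k\|_{H^2(B_R)}\lesssim\lambda_k^{1/2}$ and $\|\phi_k\|_{H^4(B_R)}\lesssim\lambda_k$ for $\Delta^2+V$ with $V\in L^\infty$ under the Navier data, so that together with \eqref{weyl} and $\lambda_k\sim k^{4/n}$ all the relevant traces, namely $\|\partial_\nu\phi_k\|_{H^{1/2}}$, $\|\partial_\nu(\Delta\phi_k)\|_{H^{1/2}}$, and $\|\partial_\nu(\Delta\phi_k)\|_{H^{-3/2}}\le\|\partial_\nu(\Delta\phi_k)\|_{L^2}$, are dominated by $k^{4/n}$.

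Then I would sum. Fixing a compact set $K\subset\mathbb C\setminus\{\lambda_k\}_{k\geq1}$, there is a constant $c_K>0$ with $|\lambda_k-\lambda|\geq c_K\lambda_k\gtrsim k^{4/n}$ for all large $k$ and all $\lambda\in K$, hence $|\lambda_k-\lambda|^{-(m+1)}\lesssim k^{-4(m+1)/n}$. Multiplying by the trace bounds of the previous step, the dominant contribution to the $k$-th term of either $\Lambda_1^{(m)}(\lambda)(f,g)$ or $\Lambda_2^{(m)}(\lambda)(f,g)$ is of size
\[
k^{4/n}\cdot k^{4/n}\cdot k^{-4(m+1)/n}\,(\|f\|_{H^{3/2}}+\|g\|_{H^{-1/2}})=k^{-4(m-1)/n}\,(\|f\|_{H^{3/2}}+\|g\|_{H^{-1/2}}),
\]
and $4(m-1)/n>1$ precisely when $m>n/4+1$; thus both series converge absolutely, in $H^{1/2}(\partial B_R)$ and $H^{-3/2}(\partial B_R)$ respectively, and uniformly for $\lambda\in K$. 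For the meromorphic extension I would note that each partial sum is a finite combination of functions that are rational in $\lambda$ with a single pole of order $m+1$ at some $\lambda_k$; the uniform convergence off the spectrum shows the limit is holomorphic on $\mathbb C\setminus\{\lambda_k\}$, and near a given eigenvalue $\mu$ one splits off the finitely many terms with $\lambda_k=\mu$ (which supply a pole of order $m+1$) from the remaining tail, which converges uniformly and is holomorphic there.

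The main obstacle is the correct bookkeeping of the Sobolev norms. The Weyl-type law \eqref{weyl} furnishes only $L^2(\partial B_R)$ bounds on the traces, whereas membership of the $\Lambda_1^{(m)}$ series in $H^{1/2}$ and the pairing of $\partial_\nu(\Delta\phi_k)$ against $g\in H^{-1/2}$ both require genuine $H^{1/2}$ control. Upgrading the $L^2$ bounds to $H^{1/2}$ by means of elliptic regularity for the biharmonic operator with an $L^\infty$ potential, while keeping the resulting growth in $k$ compatible with the convergence threshold $m>n/4+1$, is the crux of the argument; everything else is a routine summation once these trace estimates are in place.
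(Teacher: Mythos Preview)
The paper does not actually supply a proof of this lemma: it states the result immediately after the sentence ``By the Weyl-type law \eqref{weyl}, for $m\gg 1$, the above two series converge absolutely,'' cites \cite[Lemma~2.6]{AS} for the analogous second-order statement, and moves on. Your proposal therefore cannot be compared line by line with a proof in the paper, but it is precisely the argument the paper is gesturing at, and it is correct. The reduction to summability of $k^{-4(m-1)/n}$ via $|\lambda_k-\lambda|^{-(m+1)}\lesssim k^{-4(m+1)/n}$ and the crude trace bound $k^{4/n}$ on both the outer factor and the boundary pairing is exactly what is needed, and the threshold $4(m-1)/n>1$ recovers $m>n/4+1$. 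The meromorphic-extension argument by splitting off finitely many singular terms is the standard one.

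Your last paragraph is on point and in fact goes beyond what the paper spells out. The Weyl-type estimates \eqref{weyl} in the paper are stated only in $L^2(\partial B_R)$, whereas the lemma asserts convergence of $\Lambda_1^{(m)}$ in $H^{1/2}(\partial B_R)$ and requires the pairing of $\partial_\nu(\Delta\phi_k)$ against $g\in H^{-1/2}$. You correctly observe that one must upgrade $\|\partial_\nu\phi_k\|_{L^2}$ to $\|\partial_\nu\phi_k\|_{H^{1/2}}$ and control $\|\partial_\nu(\Delta\phi_k)\|_{H^{1/2}}$, and that this follows from the energy and elliptic estimates $\|\phi_k\|_{H^2}\lesssim\lambda_k^{1/2}$, $\|\phi_k\|_{H^4}\lesssim\lambda_k$ together with the trace theorem; the resulting bounds are still $\lesssim k^{4/n}$, so the summability threshold is unaffected. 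This is a genuine detail the paper suppresses, and your treatment of it is sound.

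One incidental remark: differentiating $(\lambda_k-\lambda)^{-1}$ $m$ times in $\lambda$ gives $+m!(\lambda_k-\lambda)^{-(m+1)}$, not $-m!$, so the sign in the displayed formulas of the lemma appears to be a typo in the paper; this has no bearing on the convergence argument.
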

 
Denote the DtN maps of $V_\alpha$ by $\Lambda_{\alpha, 1}, \Lambda_{\alpha, 2}, \alpha = 1, 2,$ respectively. The following lemma gives the mapping properties of the derivatives of the DtN maps. The proof is motivated by \cite[Lemma 2.32]{choulli} which is dated back to \cite[Lemma 2.3]{AS}. The lemma extends the result from the Laplacian operator to the biharmonic operator.
 
 \begin{lemma}\label{ddtn}
  Assume that $\lambda\notin \{\lambda^{(1)}_k\}_{k=1}^\infty \cup \{\lambda^{(2)}_k\}_{k=1}^\infty$ and let $l$ be a positive integer. The following estimates hold: 
\begin{align*}
\|\Lambda_{1,1}^{(j)}(\lambda) - \Lambda_{2,1}^{(j)}(\lambda)\|_{\mathcal{L}(H^{\frac{3}{2}}(\partial B_R), \, H^{t_1}(\partial B_R))} &\lesssim \frac{1}{|\lambda|^{j + \sigma_1}},\\
\|\Lambda_{1, 2}^{(j)}(\lambda) - \Lambda_{2, 2}^{(j)}(\lambda)\|_{\mathcal{L}(H^{-\frac{1}{2}}(\partial B_R), \, H^{t_2}(\partial B_R))} &\lesssim \frac{1}{|\lambda|^{j + \sigma_2}},
\end{align*}
where $0\leq j\leq l$, $|\lambda|\geq 2Q$, and
\begin{align*}
\sigma_1 = \frac{1 - 2t_1}{4}, \quad - \frac{3}{2}\leq t_1 \leq \frac{1}{2}, \quad
\sigma_2 = \frac{-3 - 2t_2}{4}, \quad -\frac{7}{2}\leq t_2 \leq -\frac{3}{2}.
\end{align*}
\end{lemma}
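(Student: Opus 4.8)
The plan is to adapt the resolvent-identity argument of \cite[Lemma 2.3]{AS} (see also \cite[Lemma 2.32]{choulli}) to the fourth-order operator, with the decay in $\lambda$ supplied by the resolvent estimate of the Appendix. Write $R_\alpha(\lambda)=(H_{V_\alpha}-\lambda)^{-1}$ for the resolvent subject to the Navier conditions, and let $u_\alpha$ solve \eqref{eqn} for $V_\alpha$ with data $(f,0)$ when treating $\Lambda_{\cdot,1}$ and $(0,g)$ when treating $\Lambda_{\cdot,2}$. The first step is the identity
\begin{align*}
u_1-u_2=-R_1(\lambda)\,Vu_2,\qquad V=V_1-V_2,
\end{align*}
which is immediate from $(H_{V_1}-\lambda)(u_1-u_2)=-Vu_2$ and the fact that $u_1-u_2$ carries vanishing Navier data. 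Hence $(\Lambda_{1,1}-\Lambda_{2,1})f=\partial_\nu(u_1-u_2)$ and $(\Lambda_{1,2}-\Lambda_{2,2})g=\partial_\nu(\Delta(u_1-u_2))$, so the lemma reduces to estimating $R_1Vu_2$ and its $\lambda$-derivatives inside $B_R$, followed by the two boundary traces.

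Differentiating the interior equation and using that the boundary data are $\lambda$-independent gives $\partial_\lambda u_\alpha=R_\alpha u_\alpha$, hence $\partial_\lambda^p u_\alpha=p!\,R_\alpha^p u_\alpha$; together with $\partial_\lambda^i R_1=i!\,R_1^{i+1}$ the Leibniz rule expresses $\partial_\lambda^j(u_1-u_2)$ as a finite combination of terms $R_1^{a}VR_2^{b}u_2$ with $a+b=j+1$, $a\ge1$, $b\ge0$. I would then bound each such term in the two interior norms $L^2(B_R)$ and $H^2(B_R)$. The ingredients are: the uniform bound $\|u_2\|_{L^2(B_R)}\lesssim\|f\|_{H^{3/2}(\partial B_R)}$ (resp. $\lesssim\|g\|_{H^{-1/2}(\partial B_R)}$), obtained by writing $u_2=w-R_2(V_2-\lambda)w$ with $w$ the biharmonic Navier lifting of the data and using $\|R_2\|_{L^2\to L^2}\lesssim|\lambda|^{-1}$; the boundedness of multiplication by $V$ on $L^2$ by $Q$; the resolvent decay $\|R_\alpha\|_{L^2\to L^2}\lesssim|\lambda|^{-1}$ in the resolvent region of the Appendix, with the potential absorbed through a Neumann series since $\|V_\alpha\|_{L^\infty}\le Q\le|\lambda|/2$; and the smoothing $\|R_\alpha\|_{L^2\to H^2}\lesssim1$ from elliptic regularity. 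Distributing these factors yields
\begin{align*}
\|R_1^{a}VR_2^{b}u_2\|_{L^2(B_R)}\lesssim|\lambda|^{-(j+1)}D,\qquad \|R_1^{a}VR_2^{b}u_2\|_{H^2(B_R)}\lesssim|\lambda|^{-j}D,
\end{align*}
where $D$ is the relevant data norm.

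It remains to apply the traces $\partial_\nu\colon H^{s}(B_R)\to H^{s-3/2}(\partial B_R)$ and $\partial_\nu\Delta\colon H^{s}(B_R)\to H^{s-7/2}(\partial B_R)$ and to interpolate. For $\Lambda_{\cdot,1}$ the two interior bounds produce estimates into $H^{-3/2}(\partial B_R)$ and $H^{1/2}(\partial B_R)$ with decays $|\lambda|^{-(j+1)}$ and $|\lambda|^{-j}$; writing $t_1=-\tfrac32+2\theta$, interpolation gives decay $|\lambda|^{-(j+1-\theta)}$, i.e. exactly $\sigma_1=1-\theta=(1-2t_1)/4$. The extra Laplacian in $\Lambda_{\cdot,2}$ shifts the boundary index by $2$, so the same two interior bounds yield estimates into $H^{-7/2}(\partial B_R)$ and $H^{-3/2}(\partial B_R)$ and, with $t_2=-\tfrac72+2\theta$, the interpolated exponent $\sigma_2=1-\theta=(-3-2t_2)/4$. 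Summing the finitely many Leibniz terms, uniformly for $0\le j\le l$, completes the argument. The main difficulty is the regularity bookkeeping at the lower endpoints $t_1=-\tfrac32$ and $t_2=-\tfrac72$, where $u_1-u_2=-R_1Vu_2$ lies only in $L^2(B_R)$, so the boundary operators $\partial_\nu$ and $\partial_\nu\Delta$ must be defined by duality through Green's formula and shown to be compatible with \eqref{eqn}; a secondary point is that the interpolation must be run at the Sobolev indices for which the fourth-order resolvent estimate of \cite{LYZ} holds, this being where the quarter-power $|\lambda|^{1/4}$-per-derivative scaling of the biharmonic operator replaces the half-power scaling of the Schr\"odinger operator in \cite{AS}.
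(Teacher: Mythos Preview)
Your approach is essentially the paper's, recast in resolvent language: where you manipulate $R_1^{a}VR_2^{b}u_2$, the paper differentiates the boundary-value problems for $u=u_1-u_2$ and $u_2$ directly and then argues by induction on $j$; the $L^2$/$H^2$ interior bounds, the interpolation in $H^s(B_R)$ for $0\le s\le 2$, and the two trace steps with the shifts $-3/2$ and $-7/2$ are identical to yours.

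One small correction: the $\|R_\alpha\|_{L^2\to L^2}\lesssim|\lambda|^{-1}$ you invoke is \emph{not} the Appendix estimate---Theorem~\ref{bound_2} concerns the cut-off free-space resolvent and gives only $|\lambda|^{-1/2}$ from $L^2$ to $L^2$. The paper obtains the needed $|\lambda|^{-1}$ decay for the Navier resolvent on $B_R$ by the elementary energy identity (pair the equation with $\bar u$ and integrate by parts, using $|\lambda|\ge 2Q$), and bounds $\|u_2\|_{L^2}$ by the data through Theorem~\ref{regularity} rather than a lifting-plus-resolvent argument. Your Neumann-series remark is therefore unnecessary here. Your observation about the trace operators at the low-regularity endpoints $t_1=-3/2$, $t_2=-7/2$ is well taken; the paper applies $\partial_\nu$ and $\partial_\nu\Delta$ to $H^s(B_R)$ across the full range $0\le s\le 2$ without comment.
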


\begin{proof}

Let $f\in H^{3/2}(\partial B_R)$, $g\in H^{-1/2}(\partial B_R)$ and $u_j, j = 1, 2$ be the solution to the boundary value problem
\begin{align*}
\begin{cases}
\Delta^2 u_j + V_j u_j - \lambda u_j = 0 &\quad \text{in}\, B_R,\\
u_j = f &\quad \text{on}\, \partial B_R,\\
\Delta u_j = g &\quad \text{on}\, \partial B_R.
\end{cases}
\end{align*}
Let $u := u_1 - u_2$. A simple calculation yields 
\begin{align*}
\begin{cases}
\Delta^2 u + V_1 u - \lambda u = (V_2 - V_1)u_2 &\quad \text{in}\, B_R,\\
u = 0 &\quad \text{on}\, \partial B_R,\\
\Delta u = 0 &\quad \text{on}\, \partial B_R.
\end{cases}
\end{align*}

For $|\lambda|\geq 2Q$, multiplying both sides of the above equation by $u$ and integrating by parts, we obtain 
\begin{align}\label{2.61}
\|u\|_{L^2(B_R)}\lesssim \frac{1}{|\lambda|} \|u_2\|_{L^2(B_R)}.
\end{align}
It follows from Theorem \ref{regularity} that 
\[
\|u_2\|_{L^2(B_R)} \lesssim \|f\|_{H^{3/2}(\partial B_R)} + \|g\|_{H^{-1/2}(\partial B_R)},
\]
which gives 
\begin{align}\label{2.62}
\|u\|_{L^2(B_R)}\lesssim \frac{1}{|\lambda|} \big(\|f\|_{H^{3/2}(\partial B_R)} + \|g\|_{H^{-1/2}(\partial B_R)}\big).
\end{align}

 Denote by $u^\prime(\lambda)$ and $u_j^\prime(\lambda)$ the derivatives of $u(\lambda)$ and $u_j(\lambda)$ with respect to $\lambda$. It can be verified that $u_2^\prime(\lambda)$
 satisfies
 \begin{align*}
\begin{cases}
\Delta^2 u_2^\prime(\lambda) + V_2 u_2^\prime(\lambda) - \lambda u_2^\prime(\lambda) = u_2 &\quad \text{in}\, B_R,\\
u_2^\prime(\lambda) = 0 &\quad \text{on}\, \partial B_R,\\
\Delta u_2^\prime(\lambda) = 0 &\quad \text{on}\, \partial B_R.
\end{cases}
\end{align*}
Using similar arguments as \eqref{2.61}, we get
\begin{align}\label{2.63}
\|u_2^\prime(\lambda)\|_{L^2(B_R)}\lesssim \frac{1}{|\lambda|} \big(\|f\|_{H^{3/2}(\partial B_R)} + \|g\|_{H^{-1/2}(\partial B_R)}\big).
\end{align}
Since $u^\prime(\lambda)$ satisfies
\begin{align*}
\begin{cases}
\Delta^2 u^\prime(\lambda) + V_1 u^\prime(\lambda) - \lambda u^\prime(\lambda) = u(\lambda) + (V_2 - V_1)u^\prime_2(\lambda) &\quad \text{in}\, B_R,\\
u^\prime(\lambda) = 0 &\quad \text{on}\,\partial B_R,\\
\Delta u^\prime(\lambda) = 0 &\quad \text{on}\,\partial B_R,
\end{cases}
\end{align*}
we have
\[
\|u^\prime(\lambda)\|_{L^2(B_R)} \lesssim \frac{1}{|\lambda|} \| u(\lambda) + (V_2 - V_1)u^\prime_2(\lambda)\|_{L^2(B_R)}.
\]
Combining \eqref{2.62} and \eqref{2.63} leads to 
\begin{align}\label{2.64}
\|u^\prime(\lambda)\|_{L^2(B_R)} \lesssim \frac{1}{|\lambda|^2} (\|f\|_{H^{3/2}(\partial B_R)} + \|g\|_{H^{-1/2}(\partial B_R)}).
\end{align}

On the other hand,  it follows from the standard regularity results of elliptic equations that 
\begin{align*}
\|u^\prime(\lambda)\|_{H^4(B_R)} \lesssim |\lambda| \|u^\prime(\lambda)\|_{L^2(B_R)} + \|u(\lambda)\|_{L^2(B_R)}
+ \|u_2^\prime(\lambda)\|_{L^2(B_R)},
\end{align*}
which gives
\begin{align}\label{2.65}
\|u^\prime(\lambda)\|_{H^2(B_R)} \lesssim \frac{1}{|\lambda|} \big(\|f\|_{H^{3/2}(\partial B_R)} + \|g\|_{H^{-1/2}(\partial B_R)}\big).
\end{align}
Recalling the interpolation inequality 
\[
\|w\|_{H^s(B_R)} \lesssim \|w\|^{1 - s/2}_{L^2(B_R)} \|w\|^{s/2}_{H^2(B_R)}, \quad 0\leq s\leq 2, \,  w\in H^2_0(B_R),
\]
we obtain from \eqref{2.64}--\eqref{2.65} that 
\begin{align*}
\|u^\prime(\lambda)\|_{H^s(B_R)} \lesssim \frac{1}{|\lambda|^{2 - s/2}}\big(\|f\|_{H^{3/2}(\partial B_R)} + \|g\|_{H^{-1/2}(\partial B_R)}\big), \quad 0\leq s\leq 2.
\end{align*}
Therefore, we have
\begin{align*}
\|\partial_\nu u^\prime(\lambda)\|_{H^{s - 3/2}(B_R)} &\lesssim \frac{1}{|\lambda|^{2 - s/2}}\big(\|f\|_{H^{3/2}(\partial B_R)} + \|g\|_{H^{-1/2}(\partial B_R)}\big), 
\quad 0\leq s\leq 2,
\end{align*}
and
\begin{align*}
\|\partial_\nu (\Delta u^\prime(\lambda))\|_{H^{s - 7/2}(B_R)} &\lesssim \frac{1}{|\lambda|^{2 - s/2}}\big(\|f\|_{H^{3/2}(\partial B_R)} + \|g\|_{H^{-1/2}(\partial B_R)}\big),
\quad 0\leq s\leq 2,
\end{align*}
which completes the proof by letting $t_1 = s - 3/2, t_2 = s - 7/2$ and an application of induction.
\end{proof}

\section{Proof of the main result}\label{proof}

First we show an Isozaki's representation formula which links the potential function and the spectral data. A similar formula may be found in \cite[Lemma 2.2]{Isozaki} for the Schr\"{o}dinger operator. The result is closely related to the scattering theory. Specifically, 
let $\varphi_{\omega}(x) = e^{{\rm i} \sqrt[4]{\lambda}\omega\cdot x}$ for $\lambda\in\mathbb{C}\backslash (-\infty, 0)$ with $\Im \sqrt[4]{\lambda}\geq 0,$ which may be considered as an incident plane wave with direction $\omega$ and wavenumber $\sqrt[4]{\lambda}$. Denote by $R_V(\lambda) = (H_V - \lambda)^{-1}$ the resolvent of $H_V$. Let $\Omega_\delta$ be the holomorphic domain of the resolvent $R_V(\lambda)$ obtained in Theorem \ref{bound_2}.

Define
 \[
 S(\omega, \theta) = -\sqrt{\lambda}  \int_{\partial B_R} \Lambda_1(\lambda)(\varphi_{\omega})\varphi_{-\theta}
 + \Lambda_2(\lambda) (\varphi_{\omega}) \varphi_{-\theta}{\rm d}s(x), \quad \omega, \theta\in\mathbb S^{n - 1},
 \]
which may be regarded as the scattering matrix for the case of the biharmonic operator.

\begin{lemma}\label{Isozaki_bi}
Assume that $\lambda\notin \{\lambda_k\}_{k=1}^\infty$.
For $\lambda \in \Omega_\delta$ it holds that 
\begin{align*}
S(\omega, \theta) = -\int_{B_R} V e^{{\rm i} \sqrt[4]{\lambda}(\omega - \theta)\cdot x} {\rm d}x - \int_{B_R} R_V(\lambda) (-V \varphi_{\omega}) V\varphi_{-\theta}{\rm d}x
- 2\sqrt{\lambda} \int_{\partial B_R} \varphi_{\omega} \partial_\nu (\varphi_{-\theta}) {\rm d}s(x).
\end{align*}
\end{lemma}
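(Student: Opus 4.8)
The plan is to derive the formula by applying the second Green's identity for the bi-Laplacian to the solution $u$ of \eqref{eqn} (with boundary data induced by $\varphi_\omega$) and to the free plane wave $\varphi_{-\theta}$, and then to convert the resulting volume term into the stated Born term plus a resolvent correction. First I would record the elementary facts about the plane waves: since $|\omega| = |\theta| = 1$, one has $\Delta\varphi_\omega = -\sqrt\lambda\,\varphi_\omega$ and $\Delta^2\varphi_\omega = \lambda\varphi_\omega$, and likewise for $\varphi_{-\theta}$; in particular both solve the free equation $(\Delta^2-\lambda)\varphi = 0$. I would let $u\in H^2(B_R)$ be the weak solution of \eqref{eqn} with $f = \varphi_\omega|_{\partial B_R}$ and $g = \Delta\varphi_\omega|_{\partial B_R} = -\sqrt\lambda\,\varphi_\omega|_{\partial B_R}$, so that by definition $\Lambda_1(\lambda)(\varphi_\omega) = \partial_\nu u|_{\partial B_R}$ and $\Lambda_2(\lambda)(\varphi_\omega) = \partial_\nu(\Delta u)|_{\partial B_R}$.

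The central computation is the second Green's identity for $\Delta^2$,
\[
\int_{B_R}\big[(\Delta^2 u)\varphi_{-\theta} - u\,\Delta^2\varphi_{-\theta}\big]\,{\rm d}x = \int_{\partial B_R}\big[\partial_\nu(\Delta u)\varphi_{-\theta} - \Delta u\,\partial_\nu\varphi_{-\theta} - \partial_\nu(\Delta\varphi_{-\theta})u + \Delta\varphi_{-\theta}\,\partial_\nu u\big]\,{\rm d}s.
\]
Substituting $\Delta^2 u = \lambda u - Vu$ and $\Delta^2\varphi_{-\theta} = \lambda\varphi_{-\theta}$ collapses the left-hand side to $-\int_{B_R} V u\,\varphi_{-\theta}\,{\rm d}x$, while on the boundary I would insert $u = \varphi_\omega$, $\Delta u = -\sqrt\lambda\,\varphi_\omega$, $\Delta\varphi_{-\theta} = -\sqrt\lambda\,\varphi_{-\theta}$ and $\partial_\nu(\Delta\varphi_{-\theta}) = -\sqrt\lambda\,\partial_\nu\varphi_{-\theta}$. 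After collecting the boundary terms, the integral becomes exactly $S(\omega,\theta) + 2\sqrt\lambda\int_{\partial B_R}\varphi_\omega\,\partial_\nu\varphi_{-\theta}\,{\rm d}s$, yielding the intermediate identity
\[
S(\omega,\theta) = -\int_{B_R} Vu\,\varphi_{-\theta}\,{\rm d}x - 2\sqrt\lambda\int_{\partial B_R}\varphi_\omega\,\partial_\nu\varphi_{-\theta}\,{\rm d}s.
\]

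It then remains to split $\int_{B_R} Vu\,\varphi_{-\theta}$ into the Born term and the resolvent correction. Setting $v := u - \varphi_\omega$, a direct computation using the free equation for $\varphi_\omega$ shows $(H_V - \lambda)v = -V\varphi_\omega$ in $B_R$, while $v = \Delta v = 0$ on $\partial B_R$; since $\lambda\in\Omega_\delta$ avoids the spectrum, Theorem \ref{bound_2} guarantees that $R_V(\lambda)$ is well-defined and bounded, so uniqueness gives $v = R_V(\lambda)(-V\varphi_\omega)$. Substituting $u = \varphi_\omega + R_V(\lambda)(-V\varphi_\omega)$ into the intermediate identity and using $\varphi_\omega\varphi_{-\theta} = e^{{\rm i}\sqrt[4]{\lambda}(\omega-\theta)\cdot x}$ produces the claimed formula.

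I expect the main obstacle to be the rigorous justification of the Green's identity rather than any conceptual difficulty: because $u$ lies only in $H^2(B_R)$, the traces $\partial_\nu u$ and $\partial_\nu(\Delta u)$ belong to $H^{1/2}(\partial B_R)$ and $H^{-3/2}(\partial B_R)$ respectively, so the boundary integrals must be interpreted as the duality pairings that define $\Lambda_1(\lambda)$ and $\Lambda_2(\lambda)$. The smoothness of the test waves $\varphi_\omega,\varphi_{-\theta}$ makes these pairings meaningful, and the identity can be secured by approximating $u$ in $H^2(B_R)$ by smoother functions and passing to the limit. The remaining verifications—the boundary-term bookkeeping and the resolvent identity for $v$—are routine once uniqueness for $\lambda\notin\{\lambda_k\}$ is invoked.
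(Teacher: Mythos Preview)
Your proposal is correct and follows essentially the same route as the paper: both arguments introduce the scattered part $v=u-\varphi_\omega$ (the paper's $\tilde u$), identify it with $R_V(\lambda)(-V\varphi_\omega)$, and then obtain the formula by integrating $(H_V-\lambda)$ against $\varphi_{-\theta}$ via the biharmonic Green identity. The only cosmetic difference is that the paper integrates by parts starting from the equation satisfied by $\tilde u$, whereas you apply Green's identity directly to $u$ and $\varphi_{-\theta}$; after inserting the Navier boundary data these computations coincide term by term.
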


\begin{proof}
Consider the boundary value problem
\begin{align*}
\begin{cases}
H_V u - \lambda u=0  &\quad \text{in}\, B_R,\\
u = 0 &\quad \text{on}\, \partial B_R,\\
\Delta u = 0  &\quad \text{on}\, \partial B_R,
\end{cases}
\end{align*}
which has a unique trivial solution $u = 0$. Decompose $u$ as $u = \tilde{u} + \varphi_{\omega}$. Then we have 
$\tilde{u} = R_V(\lambda)(-V\varphi_{\omega})$. Moreover, $\tilde{u}$ satisfies the boundary value problem
\begin{align*}
\begin{cases}
H_V\tilde{u} - \lambda \tilde{u} = -V\varphi_{\omega} &\quad \text{in}\,B_R,\\
\tilde{u} = -\varphi_{\omega} &\quad \text{on}\,\partial B_R,\\
\Delta \tilde{u} = -\Delta \varphi_{\omega}  &\quad \text{on}\,\partial B_R.
\end{cases}
\end{align*}
 Multiplying both sides of the above equation by $\varphi_{-\theta}$ and integrating by parts yield
 \begin{align*}
 &\int_{\partial B_R} \partial_\nu(\Delta\tilde{u}) \varphi_{-\theta} {\rm d}s(x)+ \int_{\partial B_R} \partial_\nu\tilde{u} \Delta\varphi_{-\theta}{\rm d}s(x)\\
 &= -\int_{B_R}\big( V \tilde{u} \varphi_{-\theta} + V \varphi_{\omega} \varphi_{-\theta}\big) {\rm d}x + \int_{\partial B_R} \big(\Delta\tilde{u} \partial_\nu(\varphi_{-\theta}) + \tilde{u} \partial_\nu(\Delta(\varphi_{-\theta}))\big) {\rm d}s(x),
 \end{align*}
 which completes the proof.
\end{proof}

Define 
\begin{align*}
\theta = c\eta + \frac{1}{2\zeta}\xi,\quad
\omega =c \eta - \frac{1}{2\zeta}\xi, \quad
\sqrt[4]{\lambda} = \zeta + {\rm i},
\end{align*} 
where the constant $c$ is chosen such that $\theta , \omega\in\mathbb S^{n - 1}$. Compared with \cite{Isozaki}, the difference comes from the fourth root of $\lambda$ instead of the square root of $\lambda$ due to the nature of the biharmonic operator. Denote by $S_\alpha(\omega, \theta)$ the above defined function $S$ corresponding to $V_\alpha$, where $ \alpha= 1, 2$.
Using the resolvent estimate in Theorem \ref{bound_2} 
\begin{align}\label{resolvent}
\|R_V(\lambda)\|_{\mathcal{L}(L^2(B_R))} \lesssim \frac{1}{\sqrt{|\lambda|}}, \quad \lambda\in\Omega_\delta,
\end{align}
and a common technique to control the high frequency tail, we obtain the following lemma, which is  useful in the proof of the main theorem. A similar procedure is also used in \cite{stefanov} for the Schr\"{o}dinger operator.

\begin{lemma}\label{control}
There exists $\zeta_0>1$ sufficiently large such that for $\zeta\geq\zeta_0$ 
\begin{align*}
\|V_1 - V_2\|^2_{L^2(B_R)}\lesssim \frac{1}{\zeta^{\frac{1}{n}}} + |\zeta|^{1/2} |S_1(\omega, \theta) - S_2(\omega, \theta)|^2.
\end{align*}
\end{lemma}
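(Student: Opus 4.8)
The plan is to start from the representation of Lemma~\ref{Isozaki_bi} applied to $V_1$ and $V_2$ and subtract the two identities. Since the boundary term $-2\sqrt{\lambda}\int_{\partial B_R}\varphi_{\omega}\partial_\nu(\varphi_{-\theta})\,{\rm d}s$ does not involve the potential, it cancels, and with $V=V_1-V_2$ I obtain
\[
S_1(\omega,\theta)-S_2(\omega,\theta)=-\int_{B_R}V\,e^{{\rm i}\sqrt[4]{\lambda}(\omega-\theta)\cdot x}\,{\rm d}x-\mathcal{E},
\]
where $\mathcal{E}=\int_{B_R}R_{V_1}(\lambda)(-V_1\varphi_{\omega})V_1\varphi_{-\theta}\,{\rm d}x-\int_{B_R}R_{V_2}(\lambda)(-V_2\varphi_{\omega})V_2\varphi_{-\theta}\,{\rm d}x$ is a nonlinear remainder. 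With the substitution $\theta-\omega=\tfrac1\zeta\xi$ and $\sqrt[4]{\lambda}=\zeta+{\rm i}$ one computes $\sqrt[4]{\lambda}(\omega-\theta)=-(1+{\rm i}/\zeta)\xi$, so the leading term is a slightly deformed Fourier transform of $V$. The idea is then to recover $\|V\|_{L^2}$ from these amplitudes via Plancherel, splitting the frequency variable into a low band $|\xi|\le\rho$ and a high band $|\xi|>\rho$, and to fix $\rho$ as a power of $\zeta$ at the very end.

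For the high band I would extend $V$ by zero (legitimate since $V\in H_0^1(B_R)$, so the extension lies in $H^1(\mathbb{R}^n)$) and use $\int_{|\xi|>\rho}|\widehat V(\xi)|^2\,{\rm d}\xi\le\rho^{-2}\int|\xi|^2|\widehat V|^2\lesssim \rho^{-2}\|V\|_{H_0^1}^2\lesssim Q^2\rho^{-2}$; this is exactly where the a priori $H_0^1$ bound is used. For the low band I would write $\widehat V(\xi)=\int_{B_R}Ve^{-{\rm i}\xi\cdot x}\,{\rm d}x$ and compare it with the amplitude above: their difference equals $\int_{B_R}Ve^{-{\rm i}\xi\cdot x}\big(1-e^{(\xi\cdot x)/\zeta}\big)\,{\rm d}x$, and since for $|\xi|\le\rho$, $|x|\le R$ the factor is $O(\rho R/\zeta)$, this error is $\lesssim(\rho/\zeta)\|V\|_{L^2}$. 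Hence on $|\xi|\le\rho$ one gets $|\widehat V(\xi)|\lesssim |S_1-S_2|+|\mathcal{E}|+\tfrac{\rho}{\zeta}\|V\|_{L^2}$, and integrating the square over the ball of radius $\rho$ (of measure $\sim\rho^n$, the supremum of $|S_1-S_2|^2$ over the admissible directions being what is denoted $|S_1-S_2|^2$) produces $\rho^n|S_1-S_2|^2+\rho^n|\mathcal{E}|^2+\tfrac{\rho^{n+2}}{\zeta^2}\|V\|_{L^2}^2$.

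The nonlinear term is controlled by the resolvent estimate \eqref{resolvent}. Since $\Im\sqrt[4]{\lambda}=1$ and $|\omega|=|\theta|=1$, the geometric-optics factors satisfy $|\varphi_{\omega}|=e^{-\omega\cdot x}\le e^{R}$ and $|\varphi_{-\theta}|=e^{\theta\cdot x}\le e^{R}$ on $B_R$, so $\|V_\alpha\varphi_{\omega}\|_{L^2}$ and $\|V_\alpha\varphi_{-\theta}\|_{L^2}$ are $\lesssim1$; combined with $\|R_{V_\alpha}(\lambda)\|_{\mathcal{L}(L^2)}\lesssim|\lambda|^{-1/2}\sim\zeta^{-2}$ this gives $|\mathcal{E}|\lesssim\zeta^{-2}$. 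Collecting the two bands yields
\[
\|V\|_{L^2}^2\lesssim\frac{Q^2}{\rho^2}+\rho^n|S_1-S_2|^2+\frac{\rho^n}{\zeta^4}+\frac{\rho^{n+2}}{\zeta^2}\|V\|_{L^2}^2.
\]
I would then choose $\rho=\zeta^{1/(2n)}$: this gives $\rho^{-2}=\zeta^{-1/n}$ and $\rho^n=\zeta^{1/2}$, so the $S$-term becomes $\zeta^{1/2}|S_1-S_2|^2$, while $\rho^n/\zeta^4=\zeta^{-7/2}\lesssim\zeta^{-1/n}$ and the coefficient $\rho^{n+2}/\zeta^2=\zeta^{(2-3n)/(2n)}\to0$, so for $\zeta\ge\zeta_0$ large it is $\le\tfrac12$ and can be absorbed into the left-hand side, producing exactly the claimed estimate.

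The delicate points, and where $\zeta_0$ must be taken large, are threefold: (i) the admissibility of the construction, namely that $\lambda=(\zeta+{\rm i})^4\in\Omega_\delta$ (so that both Lemma~\ref{Isozaki_bi} and \eqref{resolvent} apply) and that a unit-vector decomposition of $\omega,\theta$ exists, which forces $\rho\lesssim\zeta$ and is thus compatible with $\rho=\zeta^{1/(2n)}$; (ii) the quantitative gain $\zeta^{-2}$ in the resolvent bound, which is precisely what renders the nonlinear remainder genuinely lower order and is the biharmonic resolvent estimate from Theorem~\ref{bound_2}; and (iii) the exponent bookkeeping that simultaneously drives the high-frequency tail down to $\zeta^{-1/n}$ and keeps the absorbed term summable. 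I expect the main obstacle to be this balancing of $\rho$ against $\zeta$, together with verifying that $\lambda$ remains in the holomorphic domain uniformly over the low-frequency band.
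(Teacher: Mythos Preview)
Your proof is correct and follows essentially the same route as the paper: subtract the two Isozaki identities, bound the nonlinear remainder via the resolvent estimate $\|R_{V_\alpha}(\lambda)\|\lesssim|\lambda|^{-1/2}\sim\zeta^{-2}$, split frequencies at $\rho=\zeta^{1/(2n)}$, and control the high band by the $H^1_0$ bound. The only cosmetic difference is in the passage from the complex-argument amplitude $\widehat V(\xi+{\rm i}\xi/\zeta)$ to $\widehat V(\xi)$: the paper uses a one-variable Taylor expansion of $t\mapsto\widehat V(\xi+{\rm i}t\xi/\zeta)$ and bounds the gradient term by $\|V\|_{L^\infty}\le Q$ (so no absorption is needed), whereas you estimate $|1-e^{(\xi\cdot x)/\zeta}|$ directly, produce an error proportional to $\|V\|_{L^2}$, and then absorb; both variants give the same final bound.
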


\begin{proof}

Denote the difference of the two unknown potentials by $V = V_1 - V_2$.
Recall that $\sqrt[4]{\lambda} = \zeta + {\rm i}$ with $\zeta\geq 1$. By the integral identity in Lemma \ref{Isozaki_bi} and the resolvent estimate \eqref{resolvent}, we obtain 

\begin{align}\label{relation}
|\hat{V} (\xi + \frac{\rm i}{\zeta}\xi)| \lesssim \frac{1}{\zeta^2} + |S_1(\omega, \theta) - S_2(\omega, \theta)|.
\end{align}
Let $f(t) = \hat{V} (\xi + \frac{{\rm i} t}{\zeta}\xi)$. A simple calculation yields   
\begin{align}\label{fv}
|\hat{V} (\xi)| &= \Big| \int_0^1 f^\prime(t){\rm d}t - f(1)\Big|\notag\\
&\leq |\hat{V}(\xi + \frac{\rm i}{\zeta}\xi)| + \frac{1}{\zeta} \sup_{0\leq t\leq 1} |\nabla \hat{V}(\xi + \frac{{\rm i }t}{\zeta}\xi)\cdot\xi|.
\end{align}
It follows from Fourier transform that 
\[
|\partial_{\xi_i} \hat{V}(\xi + \frac{{\rm i }t}{\zeta}\xi)| = |\widehat{x_i V}(\xi + \frac{{\rm i }t}{\zeta}\xi)|\lesssim e^{\frac{|\xi|}{\zeta}}\|V\|_{L^\infty(B_R)},
\quad 0<t<1,
\]
which, along with \eqref{fv}, gives  
\begin{align}\label{5.1}
|\hat{V}(\xi)| \lesssim |\hat{V}(\xi + \frac{\rm i}{\zeta}\xi)|  + \frac{|\xi|}{\zeta} e^{\frac{|\xi|}{\zeta}} \|V\|_{L^\infty(B_R)}.
\end{align}

Combining \eqref{relation} and \eqref{5.1}, we have 
\[
|\hat{V}(\xi)| \lesssim \frac{1}{\zeta^2} + \frac{|\xi|}{\zeta} e^{\frac{|\xi|}{\zeta}} + |S_1(\omega, \theta) - S_2(\omega, \theta)|.
\]
It follows from taking integration of the above inequality in the domain $|\xi|\leq \zeta^{1/(2n)}$ that 
\begin{align}\label{5.2}
\int_{|\xi|\leq\zeta^{1/(2n)}} |\hat{V}(\xi)|^2 {\rm d}\xi \lesssim \frac{1}{\zeta^{7/2}} + \frac{\zeta^{(2 + n)/(2n)}}{\zeta^2} e^{\zeta^{1/2n-1}}
+ |\zeta|^{1/2} |S_1(\omega, \theta) - S_2(\omega, \theta)|^2.
\end{align}
On the other hand, since $V\in H_0^1(B_R)$, we have the following inequality where the high frequency tail of $\hat{V}(\xi)$ is bounded by
the $H^1$ norm of $V$: 
\begin{align*}
\|\hat{V}\|^2_{L^2(\mathbb R^3)} &= \int_{\mathbb R^3} |\hat{V}(\xi)|^2 {\rm d}\xi\\
&= \int_{|\xi|\leq\zeta^{1/(2n)}}  |\hat{V}(\xi)|^2 {\rm d}\xi + \int_{|\xi|>\zeta^{1/(2n)}}  |\hat{V}(\xi)|^2 {\rm d}\xi\\
&\leq \int_{|\xi|\leq\zeta^{1/(2n)}}  |\hat{V}(\xi)|^2 {\rm d}\xi + \frac{1}{\zeta^{1/n}}\int_{|\xi|>\zeta^{1/(2n)}}  |\xi|^2 |\hat{V}(\xi)|^2 {\rm d}\xi\\
&\leq \int_{|\xi|\leq\zeta^{1/(2n)}}  |\hat{V}(\xi)|^2 {\rm d}\xi + \frac{1}{\zeta^{1/n}}\int_{\mathbb R^3}  (1 + |\xi|^2) |\hat{V}(\xi)|^2 {\rm d}\xi\\
&\leq \int_{|\xi|\leq\zeta^{1/(2n)}}  |\hat{V}(\xi)|^2 {\rm d}\xi + \frac{1}{\zeta^{1/n}} \|V\|^2_{H^1(\mathbb R^3)}.
\end{align*}
Since $\|V\|_{H^1(\mathbb R^3)}\leq Q$, by \eqref{5.2} and $\|V\|^2_{L^2(B_R)} = \|\hat{V}\|^2_{L^2(\mathbb R^3)}$, we obtain
\begin{align*}
\|V\|^2_{L^2(B_R)}\lesssim \frac{1}{\zeta^{\frac{1}{n}}} + |\zeta|^{1/2} |S_1(\omega, \theta) - S_2(\omega, \theta)|^2, 
\end{align*}
which completes the proof.
\end{proof}

Below we show the main theorem. Motivated by \cite[Theorem 2.2]{stefanov}, the proof employs the techniques of Taylor's formula and truncation of the DtN maps which were introduced in \cite[Proof of Proposition 2.1]{AS}.
It is worth mentioning that our result is not a direct consequence of \cite[Theorem 2.2]{stefanov} and the proof is more involved, since we have to deal with the more sophisticated biharmonic operator, and consider two sets of the DtN maps and spectral data. Moreover, the resolvent and the Weyl-type law for the biharmonic operator differ significantly from the Schr\"odinger operator. 

\vskip0.15cm

\begin{proof}
Throughout the proof, we assume that
$\lambda\in\Omega^{(1)}_\delta\cap\Omega^{(2)}_\delta$, where 
$\Omega^{(\alpha)}_\delta$ denotes the holomorphic domain obtained in Theorem \ref{bound_2} for the resolvent of $H_{V_\alpha}, \alpha = 1, 2,$
$\Re\lambda\notin \{\lambda^{(1)}_k\}_{k=1}^\infty \cup \{\lambda^{(2)}_k\}_{k=1}^\infty$ with $\Re\lambda>0$,
and $\Im\lambda\geq 1$. This assumption is allowed due to the definition of $\Omega_\delta$ in Theorem \ref{bound_2}, which contains the first quadrant of the complex plane.

Let $V = V_1 - V_2$. By Lemma \ref{control}, for $\zeta\geq\zeta_0$ where $\zeta_0>1$ is sufficiently large, we have 
\begin{align*}
\|V\|^2_{L^2(B_R)}\lesssim \frac{1}{\zeta^{\frac{1}{n}}} + |\zeta|^{1/2} |S_1(\omega, \theta) - S_2(\omega, \theta)|^2.
\end{align*}

Next we estimate $|S_1(\omega, \theta) - S_2(\omega, \theta)|^2$ by the two sets of DtN maps $\|\Lambda_{1, 1}(\lambda) - \Lambda_{2, 1}( \lambda)\|_1$ and $\|\Lambda_{1, 2}(\lambda) - \Lambda_{2, 2}(\lambda)\|_2$, where $\|\cdot\|_1$ and $\|\cdot\|_2$ stand for the norms in $\mathcal{L}(H^{3/2}(\partial B_R), L^2(\partial B_R))$ and $\mathcal{L}(H^{-1/2}(\partial B_R), H^{-3/2}(\partial B_R))$, respectively,  by choosing $t_1 = 0 $ and $t_2 = -\frac{3}{2}$ in Lemma \ref{ddtn}.  
Using the estimates
\[
\|\varphi_{\omega}\|_{H^{3/2}(\partial B_R)}\lesssim \zeta^{3/2}, \quad \|\varphi_{\omega}\|_{H^{-1/2}(\partial B_R)}\leq C, 
\quad \|\varphi_{-\theta}\|_{H^{3/2}(\partial B_R)}\leq \zeta^{3/2},
\]
one has
\begin{align*}
|S_1(\omega, \theta) - S_2(\omega, \theta)|\lesssim \zeta^{7/2} \Big(\|\Lambda_{1,1}(\lambda) - \Lambda_{2,1}(\lambda)\|_1 +  \|\Lambda_{1,2}(\lambda) - 
\Lambda_{2,2}(\lambda)\|_2 \Big).
\end{align*}
Then we get from Lemma \ref{control} that
\begin{align}\label{5.4}
\|V\|^2_{L^2(B_R)} \lesssim \frac{1}{\zeta^{\frac{1}{n}}} + \zeta^{15/2} \Big(\|\Lambda_{1,1}(\lambda) - \Lambda_{2,1}(\lambda)\|^2_1
+ \|\Lambda_{1,2}(\lambda) - \Lambda_{2,2}(\lambda)\|^2_2\Big).
\end{align}

In what follows we study $\Lambda_{\alpha, 1}( \lambda), \alpha= 1, 2$. We fix a positive integer $E$ and
decompose $\Lambda_{\alpha, 1}(\lambda)$ and $\Lambda_{\alpha, 2}(\lambda)$ into a sum of a finite series and an infinite one
as follows: 
\begin{align*}
\Lambda_{\alpha, 1} (\lambda) = \tilde{\Lambda}_{\alpha, 1} (\lambda) + \hat{\Lambda}_{\alpha, 1} (\lambda),\\
\Lambda_{\alpha, 2}( \lambda) = \tilde{\Lambda}_{\alpha, 2} (\lambda) + \hat{\Lambda}_{\alpha, 2}( \lambda),
\end{align*}
where 
\begin{align*}
\tilde{\Lambda}_{\alpha, 1} (\lambda)(f, g) &= \sum_{k>E} \partial_\nu\phi^{(\alpha)}_k\Big\vert_{\partial B_R} \frac{1}{\lambda^{(\alpha)}_k - \lambda} 
\Big(\int_{\partial B_R} \partial_\nu\phi^{(\alpha)}_k f{\rm d}s(y) + \int_{\partial B_R} \partial_\nu(\Delta\phi^{(\alpha)}_k)g{\rm d}s(y)\Big),\\
\hat{\Lambda}_{\alpha, 1} (\lambda)(f, g) &= \sum_{k\leq E} \partial_\nu\phi^{(\alpha)}_k\Big\vert_{\partial B_R} \frac{1}{\lambda^{(\alpha)}_k - \lambda} 
\Big(\int_{\partial B_R} \partial_\nu\phi^{(\alpha)}_k f{\rm d}s(y) + \int_{\partial B_R} \partial_\nu(\Delta\phi^{(\alpha)}_k)g{\rm d}s(y)\Big),
\end{align*}
and
\begin{align*}
\tilde{\Lambda}_{\alpha, 2} (\lambda)(f, g) &= \sum_{k>E} \partial_\nu(\Delta\phi^{(\alpha)}_k)\Big\vert_{\partial B_R} \frac{1}{\lambda^{(\alpha)}_k - \lambda} 
\Big(\int_{\partial B_R} \partial_\nu\phi^{(\alpha)}_k f{\rm d}s(y) + \int_{\partial B_R} \partial_\nu(\Delta\phi^{(\alpha)}_k)g{\rm d}s(y)\Big),\\
\hat{\Lambda}_{\alpha, 2} (\lambda) (f, g)&= \sum_{k\leq E} \partial_\nu(\Delta\phi^{(\alpha)}_k)\Big\vert_{\partial B_R} \frac{1}{\lambda^{(\alpha)}_k - \lambda} 
\Big(\int_{\partial B_R} \partial_\nu\phi^{(\alpha)}_kf{\rm d}s(y) + \int_{\partial B_R} \partial_\nu(\Delta\phi^{(\alpha)}_k)g{\rm d}s(y)\Big).
\end{align*}

First let us consider the derivatives $\hat{\Lambda}^{(j)}_{\alpha, d}(\lambda)$ for $d = 1, 2$. Since $\lambda_k\lesssim k^{4/n}$ for all $k\geq 1$, we have 
the following estimate when $E^{4/n}\lesssim  \Re \lambda$: 
\begin{align}\label{5.7}
\|\hat{\Lambda}^{(j)}_{\alpha, d}(\lambda)\|_d\leq\frac{1}{(\Re \lambda)^{j +1}}, \quad j\geq 0.
\end{align}
Especially, for some sufficiently large $\zeta_0\geq 1$ depending on $E$, we obtain from \eqref{5.7} that when $\Re\lambda = \mathcal{O}(\zeta^4)$
\begin{align}\label{5.8}
\|\hat{\Lambda}_{\alpha, d} (\lambda)\|_d\lesssim \frac{1}{\zeta^4}, \quad \zeta\geq\zeta_0.
\end{align}
Combing \eqref{5.4} and \eqref{5.8} gives 
\begin{align}\label{5.9}
\|V\|^2_{L^2(B_R)} &\lesssim  \frac{1}{\zeta^{\frac{1}{n}}} + \frac{1}{\zeta^{\frac{1}{n}}} \notag\\
& \quad +  \zeta^{15/2} \Big(\|\tilde{\Lambda}_{1,1}(\lambda) - \tilde{\Lambda}_{2,1}(\lambda)\|^2_1
+  \|\tilde{\Lambda}_{1,2}(\lambda) - \tilde{\Lambda}_{2,2}(\lambda)\|^2_2\Big)\notag\\
&\lesssim \frac{1}{\zeta^{\frac{1}{n}}} +  \zeta^{15/2} \Big(\|\tilde{\Lambda}_{1,1}(\lambda) - \tilde{\Lambda}_{2,1}(\lambda)\|^2_1
+  \|\tilde{\Lambda}_{1,2}(\lambda) - \tilde{\Lambda}_{2,2}(\lambda)\|^2_2\Big).
\end{align}
Using Lemma \ref{ddtn} with $t_1 = 0, t_2 = -2$ and \eqref{5.7}, we have for $d = 1, 2$ that 
\begin{align}\label{5.11}
\|\tilde{\Lambda}_{1,d}^{(j)}(\lambda) - \tilde{\Lambda}_{2,d}^{(j)}(\lambda)\|_d \lesssim \frac{1}{(\Re \lambda)^{j + \sigma}},
\end{align}
where $\lambda\in\mathbb{C}, \Re \lambda\geq 2Q, m> 1 + \frac{n}{4}, 0\leq j \leq m$ and $\sigma = \min\{\sigma_1, \sigma_2\}$. Here the constants $\sigma_1$ and $\sigma_2$
are given in Lemma \ref{ddtn}.

Hereafter we assume $\zeta\gg 1$ and $\Re\lambda\geq 2Q$. Let $T:=\tilde{\lambda} - \lambda$ such that $T>0$.
Following \cite[Proof of Proposition 2.1]{AS},  by Taylor's formula, we have for $d = 1, 2$ that 
\begin{align}\label{5.12}
\tilde{\Lambda}_{\alpha, d}(\lambda) &= \sum_{k = 0}^{m -1} \frac{(\lambda - \tilde{\lambda})^k}{k!} \tilde{\Lambda}^{(k)}_{\alpha, d}(\tilde{\lambda}) + 
\int_0^1 \frac{(1 - s)^m(\lambda - \tilde{\lambda})^m}{(m - 1)!} \tilde{\Lambda}^{(m)}_{\alpha, d} (\tilde{\lambda} + s(\lambda - \tilde{\lambda})) {\rm d}s\notag\\
&:= I_{\alpha, d}(\lambda) + R_{\alpha, d}( \lambda). 
\end{align}
Since $\Re\tilde{\lambda} \geq \Re\lambda + T > T$, an application of \eqref{5.11} leads to
\begin{align}\label{5.13}
\|I_d(V_1, \lambda) - I_d(V_2, \lambda)\|_d\lesssim \frac{1}{T^\sigma}.
\end{align}

Next we study $R_{\alpha, 1}( \lambda), \alpha = 1, 2$. We start with $\tilde{\Lambda}^{(m)}_{\alpha, 1}(\lambda)$ appearing in the integral of $R_{\alpha, 1}(\lambda)$.
We know from Lemma \ref{derivative} that
\begin{align*}
\tilde{\Lambda}^{(m)}_{\alpha, 1}(\lambda)f&= \sum_{k>E} \partial_\nu\phi^{(\alpha)}_k\Big\vert_{\partial B_R} \frac{1}{(\lambda^{(\alpha)}_k - \lambda)^m} \\
&\quad \times\Big(\int_{\partial B_R} \partial_\nu\phi^{(\alpha)}_kf{\rm d}s(y) + \int_{\partial B_R} \partial_\nu(\Delta\phi^{(\alpha)}_k)g{\rm d}s(y)\Big).
\end{align*}

For simplicity we denote $\tilde{\lambda} + s(\lambda - \tilde{\lambda}) = \lambda + (1-s)T$ appearing in $\tilde{\Lambda}^{(m)}_{\alpha, d} (\tilde{\lambda} + s(\lambda - \tilde{\lambda})) $ by $\beta = \beta(s)$. We further let
\[
E_\alpha(\lambda) = \max \{j\geq E ; c\lambda_{j + 1}^{(\alpha)}< \Re\lambda\},
\]
where $c$ is any positive constant satisfying $0<c<1$.
Let $E(\lambda) = \max\{E_1(\lambda), E_2(\lambda)\}$. For sufficiently large $\Re\lambda$, we decompose the series in $\tilde{\Lambda}^{(m)}_{\alpha, 1}(\beta)f$
as a sum of a finite one and an infinite one in the following way: 
\[
\tilde{\Lambda}^{(m)}_{\alpha, 1}(\beta)f = \tilde{\Lambda}^{(m)}_{\alpha, 1, 1}(\beta)f + \tilde{\Lambda}^{(m)}_{\alpha, 1, 2}(\beta)f,
\]
where
\begin{align*}
\tilde{\Lambda}^{(m)}_{\alpha, 1, 1}( \beta)& = \sum_{k=E+1}^{E(\lambda)} \partial_\nu\phi^{(\alpha)}_k\Big\vert_{\partial B_R} \frac{1}{(\lambda^{(\alpha)}_k - \beta)^{m + 1}} \\
&\quad \times\Big(\int_{\partial B_R} \partial_\nu\phi^{(\alpha)}_k f{\rm d}s(y) + \int_{\partial B_R} \partial_\nu(\Delta\phi^{(\alpha)}_k)g{\rm d}s(y)\Big)
\end{align*}
and
\begin{align*}
\tilde{\Lambda}^{(m)}_{\alpha, 1, 2}(\beta) &= \sum_{k>E(\lambda)} \partial_\nu\phi^{(\alpha)}_i\Big\vert_{\partial B_R} \frac{1}{(\lambda^{(\alpha)}_k - \beta)^{m + 1}} \\
&\quad\times \Big(\int_{\partial B_R} \partial_\nu\phi^{(\alpha)}_k f{\rm d}s(y) + \int_{\partial B_R} \partial_\nu(\Delta\phi^{(\alpha)}_k)g{\rm d}s(y)\Big).
\end{align*}
Following \cite[Proof of Proposition 2.1]{AS}, we further make the decomposition
\[
\tilde{\Lambda}^{(m)}_{1, 1, 1}(\beta) - \tilde{\Lambda}^{(m)}_{2, 1, 1}(\beta) = L_1 + L_2 + L_3,
\]
where
\begin{align*}
L_1f &= \sum_{k=E+1}^{E(\lambda)} \partial_\nu\phi^{(1)}_k\Big(\frac{1}{(\lambda^{(1)}_k - \beta)^{m + 1}}  - \frac{1}{(\lambda^{(2)}_k - \beta)^{m + 1}} \Big)\\
&\quad \times\Big(\int_{\partial B_R} \partial_\nu\phi^{(1)}_k f{\rm d}s(y) + \int_{\partial B_R} \partial_\nu(\Delta\phi^{(1)}_k)g{\rm d}s(y)\Big),\\
L_2f&= \sum_{k=E+1}^{E(\lambda)} \frac{ \partial_\nu\phi^{(1)}_k}{(\lambda^{(2)}_k - \beta)^{m + 1}} 
\Big(\int_{\partial B_R} (\partial_\nu\phi^{(1)}_k - \partial_\nu\phi^{(2)}_k) f{\rm d}s(y) \\
&\quad + \int_{\partial B_R} ( \partial_\nu(\Delta\phi^{(1)}_k) - \partial_\nu(\Delta\phi^{(2)}_k)) g{\rm d}s(y)  \Big),\\
L_3f&= \sum_{k=E+1}^{E(\lambda)} \frac{1}{(\lambda^{(2)}_k - \beta)^{m + 1}} 
\Big(\int_{\partial B_R} \partial_\nu\phi^{(2)}_kf{\rm d}s(y) \\
&\quad + \int_{\partial B_R} \partial_\nu(\Delta\phi^{(2)}_k)g{\rm d}s(y)\Big) (\partial_\nu\phi^{(1)}_k - \partial_\nu\phi^{(2)}_k).
\end{align*}

When $\varrho>\frac{8}{n} + 1$, we have from a simple calculation that  
\begin{align*}
\|L_1\|&\lesssim \frac{E(\lambda)^\varrho}{|\Im\lambda|^{m + 2}}\varepsilon_0\Big( \sum_{k=E+1}^{E(\lambda)} k^{-\varrho}\|\partial_\nu\phi^{(1)}_k\|^2_{L^2(\partial B_R)}\\
&\quad+ \sum_{k=E+1}^{E(\lambda)} k^{-\varrho}\|\partial_\nu\phi^{(1)}_k\|_{L^2(\partial B_R)}
\|\partial_\nu(\Delta\phi^{(1)}_k)\|_{L^2(\partial B_R)}\Big).
\end{align*}
We point out that the assumption $\Im\lambda\geq 1$ is useful in deriving the above estimate, since there may not be a uniform gap between adjacent eigenvalues. Using the following Weyl-type inequality \cite{LYZ}: 
\begin{align*}
\|\partial_\nu\phi^{(\alpha)}_k\|_{L^2(\partial B_R)}\lesssim k^{2/n}, \quad \|\partial_\nu(\Delta\phi^{(\alpha)}_k)\|_{L^2(\partial B_R)}\lesssim k^{4/n},
\end{align*}
we get 
\begin{align*}
& \sum_{k=E+1}^{E(\lambda)} k^{-\varrho}\|\partial_\nu\phi^{(1)}_k\|^2_{L^2(\partial B_R)}+ \sum_{k=E+1}^{E(\lambda)} k^{-\varrho}\|\partial_\nu\phi^{(1)}_k\|_{L^2(\partial B_R)}
\|\partial_\nu(\Delta\phi^{(1)}_k)\|_{L^2(\partial B_R)}\\
&\lesssim \sum_{k\geq 1} k^{-\varrho + 6/n}.
\end{align*}
Thus, we obtain the estimate
\[
\|L_1\|\lesssim  \frac{E(\lambda)^\varrho}{|\Im\lambda|^{m + 2}} \varepsilon_0.
\]

Denote the two sets of spectral data discrepancy by
\begin{align*}
 \varepsilon_1 &= \sum_{k\geq 1} k^{-4m/n} \|\partial_\nu\phi^{(1)}_{k + E} - \partial_\nu\phi^{(2)}_{k + E}\|_{L^2(\partial B_R)},\\
 \varepsilon_2 &= \sum_{k\geq 1} k^{-4m/n} \|\partial_\nu(\Delta\phi^{(1)}_{k + E}) - \partial_\nu(\Delta\phi^{(2)}_{k + E})\|_{L^2(\partial B_R)}.
\end{align*}
Similarly, we may show that 
\begin{align*}
\|L_2\| &\lesssim  \frac{E(\lambda)^{4m/n + 2/n}}{|\Im\lambda|^{m + 1}} ( \varepsilon_1 +  \varepsilon_2),\\
\|L_3\| &\lesssim  \frac{E(\lambda)^{4m/n + 4/n}}{|\Im\lambda|^{m + 1}}  \varepsilon_1.
\end{align*}
Letting $ \varepsilon =  \varepsilon_0 +  \varepsilon_1 +  \varepsilon_2$, we have
\[
\|L_1\| + \|L_2\| + \|L_3\| \lesssim  \frac{E(\lambda)^\varrho + E(\lambda)^{4(m + 1)/n}}{|\Im\lambda|^{m + 1}} \varepsilon.
\]
Choosing $\beta = 4(m + 1)/n$ and recalling that $m> 1 + \frac{n}{4}$ we have $\beta>\frac{8}{n} + 1$, which gives
\begin{align}\label{5.14}
\|\tilde{\Lambda}^{(m)}_{1, 1, 1}(\beta) - \tilde{\Lambda}^{(m)}_{2, 1, 1}(\beta)\|_1\lesssim  \frac{E(\lambda)^{4(m + 1)/n}}{|\Im\lambda|^{m + 1}}\varepsilon.
\end{align}

From the inequality
\[
|\lambda^{(\alpha)}_k - \beta|\geq \lambda^{(\alpha)}_k - \Re\lambda + (1 - s)\rho \geq \lambda^{(\alpha)}_k - \Re\lambda \geq (1 - c)\lambda^{(\alpha)}_k,
\]
we obtain 
\[
|\lambda^{(\alpha)}_k - \beta|^{-(m + 1)} \lesssim \frac{1}{({\lambda^{(\alpha)}_k})^{m + 1}}\lesssim \frac{1}{(k^{\frac{4}{n}})^{m + 1}}\lesssim k^{-4m/n}.
\]
Therefore,  using similar arguments above by decomposing $\tilde{\Lambda}^{(m)}_{1, 1, 2}(\lambda) - \tilde{\Lambda}^{(m)}_{2, 1, 2}(\lambda)$ into three parts, we can obtain 
\[
\|\tilde{\Lambda}^{(m)}_{1, 1, 2}(\lambda) - \tilde{\Lambda}^{(m)}_{2, 1, 2}(\lambda)\|_1\lesssim\varepsilon,
\]
which, together with \eqref{5.14}, implies
\[
\|\tilde{\Lambda}^{(m)}_{1,1}(\lambda) - \tilde{\Lambda}^{(m)}_{2,1}(\lambda)\|_1\lesssim E(\lambda)^{4(m + 1)/n}\varepsilon.
\]
From the definition of $E(\lambda)$ we obtain
\[
E^{4/n}\lesssim\lambda^{(\alpha)}_E\lesssim \frac{1}{c}\Re\lambda.
\]
As a consequence, it holds that
\[
\|\tilde{\Lambda}^{(m)}_{1, 1}(\lambda) - \tilde{\Lambda}^{(m)}_{2,1}( \lambda)\|_1\lesssim (\Re\lambda)^{m + 1}\varepsilon,
\]
which, together with $\Re\lambda = \mathcal{O}(\zeta^4)$, gives 
\[
\|\tilde{\Lambda}^{(m)}_{1,1}(\lambda) - \tilde{\Lambda}^{(m)}_{2,1}(\lambda)\|_1\lesssim \zeta^{4(m + 1)}\varepsilon.
\]
Then
\begin{align}\label{5.15}
\|R_{1,1}(\lambda) - R_{1,2}(\lambda)\|_1 \lesssim T^m\zeta^{4(m + 1)}\varepsilon.
\end{align}

Combining \eqref{5.12}, \eqref{5.13} and \eqref{5.15} leads to 
\[
\|\tilde{\Lambda}_{1,1}(\beta) - \tilde{\Lambda}_{2,1}(\beta)\|_1\lesssim \frac{1}{T^\sigma} + T^m\zeta^{4(m + 1)}\varepsilon.
\]
Using similar arguments, we obtain
\begin{align*}
\|R_{1,2}(\lambda) - R_{2,2}(\lambda)\|_2 \lesssim T^m\zeta^{4(m + 1)}\varepsilon,
\end{align*}
which gives
\[
\|\tilde{\Lambda}_{1,2}(\beta) - \tilde{\Lambda}_{2,2}(\beta)\|_2\lesssim \frac{1}{T^\sigma} + T^m\zeta^{4(m + 1)}\varepsilon.
\]
Substituting the above estimates into \eqref{5.9} yields 
\begin{align*}
\|V\|^2_{L^2(B_R)} \lesssim \frac{1}{\zeta^{\frac{1}{n}}} +  \zeta^{15/2}\Big(\frac{1}{T^{2\sigma}} + T^{2m}\zeta^{8(m + 1)}\varepsilon^2\Big).
\end{align*}
Taking $T = (\Re\lambda)^\varsigma$ where $\varsigma = 1/\sigma$ gives 
\begin{align*}
\|V\|^2_{L^2(B_R)} \lesssim \frac{1}{\zeta^{\frac{1}{n}}} + \zeta^{15/2 + 8\varsigma m + 8(m + 1)}\varepsilon^2.
\end{align*}
Using the standard minimization with respect to $\zeta$ (cf. \cite[Proof of Proposition 2.1]{AS}), we obtain the stability estimate
\[
\|V\|^2_{L^2(B_R)} \lesssim \varepsilon^{2\delta},\quad
\delta  = \frac{1}{16n(2 + \varsigma m + m )},
\]
which completes the proof of Theorem \ref{main}. 
\end{proof}

\appendix

\section{Useful estimates}

\begin{theorem}\label{regularity}
Let $u\in H^2(B_R)$ be a weak solution of the following boundary value problem with the Navier boundary condition: 
\begin{align*}
\begin{cases}
H_V u = F_1 \quad &\text{in}\,B_R,\\
u = f \quad  \quad & \text{on}\,\partial B_R,\\
\Delta u = g  \quad & \text{on}\,\partial B_R,
\end{cases}
\end{align*}
where $H_V = \Delta^2 + V$ and $0$ is not an eigenvalue of $H_V$. Then
\[
\|u\|_{H^2(\Omega)} \lesssim \|F\|_{L^2(\Omega)} + \|f\|_{H^{\frac{3}{2}}(\partial\Omega)} + \|g\|_{H^{-\frac{1}{2}}(\partial\Omega)}.
\]
\end{theorem}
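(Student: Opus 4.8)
The plan is to convert the fourth-order Navier problem into a coupled system of two second-order Dirichlet problems for the Laplacian, apply the standard elliptic theory at the relevant (and rather low) regularity levels, and then absorb a lower-order term by invoking the hypothesis that $0$ is not an eigenvalue of $H_V$. Concretely, I would introduce the auxiliary function $w := \Delta u \in L^2(B_R)$. Since $u \in H^2(B_R)$ solves $H_V u = F_1$, the function $w$ satisfies $\Delta w = F_1 - V u$ in $B_R$ in the distributional sense, with right-hand side $F_1 - V u \in L^2(B_R)$. Because $w \in L^2(B_R)$ and $\Delta w \in L^2(B_R)$, the trace $w|_{\partial B_R}$ is well defined in $H^{-1/2}(\partial B_R)$ through Green's formula, and the Navier condition $\Delta u = g$ reads $w|_{\partial B_R} = g$. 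The original problem is thus equivalent to the system
\begin{align*}
\Delta u = w, \quad u|_{\partial B_R} = f, \qquad \Delta w = F_1 - V u, \quad w|_{\partial B_R} = g.
\end{align*}

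The delicate point is the equation for $w$, whose boundary datum $g$ lies only in $H^{-1/2}(\partial B_R)$. I would solve it by the transposition (very weak solution) method: for every test function $\varphi \in H^2(B_R) \cap H_0^1(B_R)$ one has $\Delta \varphi \in L^2(B_R)$ and $\partial_\nu \varphi|_{\partial B_R} \in H^{1/2}(\partial B_R)$, and since $\Delta : H^2(B_R) \cap H_0^1(B_R) \to L^2(B_R)$ is an isomorphism (classical Dirichlet regularity, $0$ not being a Dirichlet eigenvalue of $-\Delta$), duality yields a unique $w \in L^2(B_R)$ with
\begin{align*}
\|w\|_{L^2(B_R)} &\lesssim \|F_1 - V u\|_{L^2(B_R)} + \|g\|_{H^{-1/2}(\partial B_R)} \\
&\lesssim \|F_1\|_{L^2(B_R)} + \|g\|_{H^{-1/2}(\partial B_R)} + \|u\|_{L^2(B_R)}.
\end{align*}
Feeding this $w$ into the first equation and invoking the standard Dirichlet estimate for $\Delta u = w \in L^2(B_R)$ with boundary datum $f \in H^{3/2}(\partial B_R)$, which produces a solution in $H^2(B_R)$, I obtain
\begin{align*}
\|u\|_{H^2(B_R)} &\lesssim \|w\|_{L^2(B_R)} + \|f\|_{H^{3/2}(\partial B_R)} \\
&\lesssim \|F_1\|_{L^2(B_R)} + \|f\|_{H^{3/2}(\partial B_R)} + \|g\|_{H^{-1/2}(\partial B_R)} + \|u\|_{L^2(B_R)}.
\end{align*}

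It remains to absorb the lower-order term $\|u\|_{L^2(B_R)}$, which is precisely where the assumption that $0$ is not an eigenvalue of $H_V$ enters. I would argue by contradiction and compactness: were the asserted estimate false, there would exist data with vanishing norms whose solutions $u_k$ are normalized by $\|u_k\|_{H^2(B_R)} = 1$; the bound just derived then gives $1 \lesssim \|u_k\|_{L^2(B_R)} + o(1)$, and the compact embedding $H^2(B_R) \hookrightarrow L^2(B_R)$ allows me to extract a subsequence converging in $L^2(B_R)$ and weakly in $H^2(B_R)$ to a limit $u_\infty$ solving $H_V u_\infty = 0$ with homogeneous Navier conditions. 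Since $0$ is not an eigenvalue, $u_\infty = 0$, which forces $\|u_k\|_{L^2(B_R)} \to 0$ and hence $\|u_k\|_{H^2(B_R)} \to 0$, contradicting the normalization.

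I expect the main obstacle to be the rigorous treatment of the very weak Dirichlet solve for $w$ with the rough datum $g \in H^{-1/2}(\partial B_R)$: one must verify that the transposition identity is well posed, that the trace recovered from $w \in L^2(B_R)$ with $\Delta w \in L^2(B_R)$ is genuinely consistent with the Navier condition $w|_{\partial B_R} = g$, and that the resulting very weak solution indeed coincides with $\Delta u$. Once this low-regularity bookkeeping is secured, the two elliptic estimates and the compactness argument are entirely standard.
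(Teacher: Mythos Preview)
The paper states Theorem~\ref{regularity} in the appendix without proof, treating it as a standard regularity result; there is therefore no ``paper's own proof'' to compare against. Your proposal is correct and is exactly the classical route one takes for Navier boundary conditions on a ball: decouple $\Delta^2$ into two Dirichlet Laplacians via $w=\Delta u$, run the transposition (very weak) estimate for $w$ to accommodate $g\in H^{-1/2}(\partial B_R)$, apply the usual $H^2$ Dirichlet estimate for $u$, and then absorb the residual $\|u\|_{L^2}$ by the standard contradiction/compactness argument using the hypothesis that $0$ is not a Navier eigenvalue of $H_V$. The only point that deserves care is the one you already flagged---the trace theory for $w\in L^2(B_R)$ with $\Delta w\in L^2(B_R)$ and the consistency of the very weak solution with the given $\Delta u$---but this is standard (the space $H(\Delta)$ admits a continuous trace into $H^{-1/2}(\partial B_R)$, and the transposition identity you wrote is precisely Green's second formula in that setting), so no genuine obstacle arises.
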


The following lemma gives an estimate for the normal derivatives of the eigenfunctions on $\partial B_R$ and a Weyl-type inequality for the Dirichlet eigenvalues.

\begin{lemma}\label{eigenfunction_est1}
The following estimate holds in $\mathbb R^n$:
\begin{align}\label{boundary_estimate_2}
\|\partial_\nu \phi_k\|_{L^2(\partial B_R)}\leq C\lambda_k^{\frac{1}{2}},\quad \|\partial_\nu (\Delta\phi_k)\|_{L^2(\partial B_R)}\leq C\lambda_k,
\end{align}
where the positive constant $C$ is independent of $k$. Moreover, the following Weyl-type inequality holds for the Dirichlet eigenvalues $\{\mu_k\}_{k=1}^\infty$:
\begin{align}\label{weyl_1}
E_1 k^{4/n}\leq \lambda_k\leq E_2 k^{4/n},
\end{align}
where $E_1$ and $E_2$ are two positive constants independent of $k$.
\end{lemma}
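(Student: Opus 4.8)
The plan is to treat the two assertions separately, reducing both to the corresponding second-order Dirichlet problems by exploiting the Navier boundary condition. Setting $w := -\Delta\phi_k$, the eigenvalue equation decouples into the pair
\[
-\Delta\phi_k = w \ \text{in}\ B_R,\quad \phi_k = 0\ \text{on}\ \partial B_R, \qquad -\Delta w = (\lambda_k - V)\phi_k\ \text{in}\ B_R,\quad w = 0\ \text{on}\ \partial B_R,
\]
so that every estimate can be obtained by applying standard second-order tools to $\phi_k$ and to $w$ in turn. Throughout I normalize $\|\phi_k\|_{L^2(B_R)} = 1$ and note that elliptic regularity for the Navier problem gives $\phi_k \in H^4(B_R)$, which justifies all integrations by parts and the traces below.

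For the Weyl-type inequality \eqref{weyl_1}, I first observe that when $V \equiv 0$ the decoupling above shows that the Navier biharmonic eigenfunctions are exactly the Dirichlet eigenfunctions of $-\Delta$ and the eigenvalues are $\mu_k^2$, where $\{\mu_k\}$ are the Dirichlet eigenvalues of $-\Delta$ on $B_R$. The classical Weyl law $\mu_k \sim c_n k^{2/n}$ then yields $c_1 k^{4/n} \leq \mu_k^2 \leq c_2 k^{4/n}$. To pass to a general bounded potential, I use the min-max characterization on the form domain $H^2(B_R)\cap H_0^1(B_R)$ together with the two-sided form bound $|\int_{B_R} V|\phi|^2| \leq \|V\|_{L^\infty}\|\phi\|_{L^2}^2$, which perturbs each eigenvalue by at most $\|V\|_{L^\infty}$; since this shift is negligible against the divergent $k^{4/n}$ growth (and can be absorbed into the constants for the finitely many small $k$ using positivity of the $\lambda_k$), the two-sided bound \eqref{weyl_1} follows.

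For the boundary estimates \eqref{boundary_estimate_2}, the main tool is a Rellich--Pohozaev identity generated by the radial vector field $x\cdot\nabla$ (equivalently, a commutator computation with the dilation generator). Applied to a solution of $-\Delta u = h$ with $u = 0$ on $\partial B_R$, and using that $\nabla u = (\partial_\nu u)\nu$ and $x\cdot\nu = R$ on $\partial B_R$, the identity reads
\[
\frac{R}{2}\int_{\partial B_R}|\partial_\nu u|^2\,{\rm d}s = -\int_{B_R}(x\cdot\nabla u)\,h\,{\rm d}x + \Big(1 - \tfrac{n}{2}\Big)\int_{B_R}|\nabla u|^2\,{\rm d}x,
\]
so the boundary $L^2$ norm is controlled by $\|\nabla u\|_{L^2}\|h\|_{L^2}$ and $\|\nabla u\|_{L^2}^2$. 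I will first record the interior energy bounds: testing the equation against $\phi_k$ and integrating by parts twice gives $\|\Delta\phi_k\|_{L^2}^2 = \lambda_k - \int V|\phi_k|^2 \lesssim \lambda_k$, hence $\|w\|_{L^2}\lesssim \lambda_k^{1/2}$; then $\|\nabla\phi_k\|_{L^2}^2 = \int w\phi_k \lesssim \lambda_k^{1/2}$ and $\|\nabla w\|_{L^2}^2 = \int(\lambda_k - V)\phi_k w \lesssim \lambda_k^{3/2}$. Feeding $u = \phi_k$, $h = w$ into the identity bounds $\|\partial_\nu\phi_k\|_{L^2(\partial B_R)}^2$ by $\lambda_k^{1/2}\cdot\lambda_k^{1/4} + \lambda_k^{1/2} \lesssim \lambda_k$, giving the first estimate; feeding $u = w$, $h = (\lambda_k - V)\phi_k$ bounds $\|\partial_\nu(\Delta\phi_k)\|_{L^2(\partial B_R)}^2 = \|\partial_\nu w\|_{L^2(\partial B_R)}^2$ by $\lambda_k^{3/4}\cdot\lambda_k + \lambda_k^{3/2} \lesssim \lambda_k^2$, giving the second.

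The routine part is the energy identities; the step requiring the most care is the derivation and rigorous justification of the Rellich--Pohozaev identity, in particular the reduction of the boundary integrals to $\int_{\partial B_R}(x\cdot\nu)|\partial_\nu u|^2$, which relies on the homogeneous Dirichlet condition for $u$ on each of the two decoupled problems, together with the verification that the $H^4$ regularity of $\phi_k$ supplies the traces needed to make every boundary term meaningful. I note that the exponents produced above are in fact better than those claimed; I only need the weaker bounds $\lambda_k^{1/2}$ and $\lambda_k$, which follow since $\lambda_k\to\infty$ and the finitely many small eigenvalues are absorbed into the constant $C$.
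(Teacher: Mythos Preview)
Your argument is correct. Both parts go through as you describe: the decoupling $w=-\Delta\phi_k$ legitimately reduces the Navier problem to two Dirichlet Poisson problems, the energy bounds $\|w\|\lesssim\lambda_k^{1/2}$, $\|\nabla\phi_k\|\lesssim\lambda_k^{1/4}$, $\|\nabla w\|\lesssim\lambda_k^{3/4}$ are right, and plugging into the second-order Rellich identity gives the stated (indeed slightly sharper) boundary bounds. Your treatment of the Weyl inequality via the identification of the $V=0$ Navier spectrum with $\{\mu_k^2\}$ and min--max perturbation is also fine.

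The route, however, differs from the paper's. Rather than decoupling, the paper works directly at the fourth-order level: it introduces the dilation generator $A=x\cdot\nabla+\tfrac{n}{2}$, sets $B=A\Delta$, and computes the single commutator $\int_{B_R} u\,[H_V,B]\,u\,{\rm d}x$. After integration by parts with the Navier conditions, both boundary terms $\int_{\partial B_R}|\partial_\nu u|^2$ and $\int_{\partial B_R}|\partial_\nu(\Delta u)|^2$ appear in one identity, and the interior commutator $[H_V,B]=4\Delta^3+[V,A\Delta]$ is bounded using $\|u\|_{H^2}^2\lesssim\lambda$. Your approach is essentially a factored version of this: since the Navier biharmonic operator is the square of the Dirichlet Laplacian, the fourth-order Rellich-type identity splits into two applications of the standard second-order Pohozaev identity. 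What you gain is transparency and a cleaner dependence on $V$ (only $\|V\|_{L^\infty}$ enters, with no need to interpret $[V,A\Delta]$ when $V$ is merely bounded); what the paper's one-shot commutator buys is a template that would adapt more directly to boundary conditions or higher-order operators where no such factorization is available. For the Weyl part the two arguments are essentially the same, yours just making explicit the reduction from the Navier spectrum of $\Delta^2$ to the Dirichlet spectrum of $-\Delta$.
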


\begin{proof}
We begin with the estimate \eqref{boundary_estimate_2} for the eigenfunctions on the boundary. Let $u$ be an eigenfunction  with eigenvalue $\mu$ such that
\begin{align*}
\begin{cases}
H_V u = \lambda u\quad &\text{in}\, B_R,\\
u = \Delta u = 0 \quad &\text{on}\, \partial B_R.
\end{cases}
\end{align*}
Define a differential operator 
\[
A = \frac{1}{2}(x\cdot \nabla+ \nabla\cdot x) = x\cdot\nabla + \frac{n}{2} = |x|\partial_\nu + \frac{n}{2}.
\]
Denote the commutator of two differential operators by $[\cdot, \, \cdot]$ such that $[O_1, O_2] = O_1 O_2 - O_2 O_1$ for two differential operators $O_1$ and $O_2$. Then we have  
\begin{align}\label{com}
[\Delta^k, A] = 2k\Delta^k, \quad k\in\mathbb N^+.
\end{align}
Denote $B = A\Delta$. A simple calculation gives
\begin{align*}
&\int_{B_R} u [H_V, B] u {\rm d}x
= \int_{B_R} \left(u (\Delta^2 + V) (Bu) - u B (\Delta^2 + V) u \right){\rm d}x\\
&= \int_{B_R} (\Delta^2 u + Vu - \lambda u) Bu {\rm d}x+ \int_{\partial B_R}\left( u\partial_\nu(\Delta Bu) - \partial_\nu u\Delta(Bu) \right) {\rm d}s\\
&\quad + \int_{\partial B_R}\left( \Delta u\partial_\nu(Bu) - \partial_\nu (\Delta u)Bu \right) {\rm d}s\\
&= - \int_{\partial B_R} \left(\partial_\nu u\Delta(Bu) + \partial_\nu (\Delta u)Bu\right) {\rm d}s\\
&= - \int_{\partial B_R}\left( \partial_\nu u\Delta(Bu) + R|\partial_\nu (\Delta u)|^2\right){\rm d}s,
\end{align*}
where we have used $u = \Delta u = 0$ on $\partial B_R$ and Green's formula. By \eqref{com}, we have 
\[
\Delta Bu = \Delta A\Delta = (A\Delta + 2\Delta)\Delta = A\Delta^2 + 2\Delta^2. 
\]
It holds that 
\begin{align*}
&\int_{\partial B_R} \partial_\nu u\Delta(Bu){\rm d}s = \int_{\partial B_R} \partial_\nu u (A\Delta^2 + 2\Delta^2)u {\rm d}s \\
&=  \int_{\partial B_R} \Big(\partial_\nu u \big(\big(R\partial_\nu + 
\frac{n}{2}\big)\Delta^2 u\big) + 2\Delta^2u \Big){\rm d}s\\
&= R\int_{\partial B_R} \partial_\nu u \,\partial_\nu(\Delta^2 u){\rm d}s
= R\int_{\partial B_R} \partial_\nu u \,\partial_\nu(\mu u - Vu){\rm d}s,
\end{align*}
where we have used $\Delta^2u = -Vu + \lambda u = 0$ and $u = 0$ on $\partial B_R$. Hence 
\begin{align}\label{I}
 \Big|\int_{\partial B_R} \partial_\nu u\Delta(Bu){\rm d}s\Big| \geq (\lambda - \|V\|_{L^\infty(B_R)}) \int_{\partial B_R} |\partial_\nu u|^2{\rm d}s.
\end{align}

On the other hand we have 
\begin{align}\label{II}
\int_{\partial B_R} \partial_\nu (\Delta u)Bu {\rm d}s= \int_{\partial B_R} \partial_\nu (\Delta u)Bu {\rm d}s = R \int_{\partial B_R} |\partial_\nu (\Delta u)|^2 {\rm d}s.
\end{align}
Moreover, it follows from \eqref{com} that $[H_V, B] = 4\Delta^3 + [V, A\Delta]$, which gives
\begin{align}\label{III}
&\Big|\int_{B_R} u [H_V, B] u {\rm d}x\Big|= \Big|\int_{B_R} \left(4u\Delta^3u + [V, A\Delta]u\right) {\rm d}x\Big|\notag\\
&= \Big|\int_{B_R}\left( 4u\Delta (-Vu + \lambda u) + [V, A\Delta]u \right){\rm d}x\Big|\notag\\
&\leq C\lambda \|u\|^2_{H^2(B_R)}\leq C\lambda^2.
\end{align}
Here we have used the fact that the commutator $[V, A\Delta]$ has order of 2 at most. Using \eqref{I}--\eqref{III} we obtain 
\[
\|\partial_\nu u\|^2_{L^2(\partial B_R)}\leq\lambda,\quad \|\partial_\nu (\Delta u)\|^2_{L^2(\partial B_R)}\leq\lambda^2,
\]
which completes the proof of \eqref{boundary_estimate_2}.

Next, we prove the Weyl-type inequality \eqref{weyl_1}. Assume $\lambda_1<\lambda_2<\cdots$ are the eigenvalues of the
operator $H$. Denote the functional space
\[
H_\vartheta^{2}(B_R)=\{\psi\in H^2(B_R);\, \Delta\psi = \psi =0\text{ on }\partial B_R\},
\]

Then we have following min-max principle: 
\[
\lambda_k=\max_{\phi_1,\cdots,\phi_{k-1}}\min_{\psi\in[\phi_1,\cdots,
\phi_{k-1}]^\perp\atop \psi\in H_\vartheta^{2}(B_R)}\frac{\int_{B_R} |\Delta
\psi|^2 + V|\psi|^2{\rm d}x}{\int_{B_R}\psi^2{\rm d}x}.
\]
Assume that $\lambda_1^{(1)}<\lambda_2^{(1)}<\cdots$ are the eigenvalues for the operator
$\Delta^2$. By the min-max principle, we have
\[
C_1\lambda_k^{(1)}<\lambda_k<C_2\lambda_k^{(1)}, \quad k=1, 2, \dots,
\]
where $C_1$ and $C_2$ are two positive constants depending on $\|V\|_{L^\infty(B_R)}$. We have from Weyl's law \cite{Weyl} for $\Delta^2$ that 
\[
\lim_{k\rightarrow+\infty}\frac{\lambda_k^{(1)}}{k^{4/n}}=D,
\]
where $D$ is a constant. Therefore there exist two constants $E_1$ and $E_2$
such that 
\[
E_1 k^{4/n}\leq \lambda_k\leq E_2 k^{4/n},
\]
which completes the proof.
\end{proof}

Denote the resolvent by $R_V(\lambda) = (-\Delta + V - \lambda)^{-1}, \lambda\in\mathbb C$. The following theorem gives a resonance-free region and a resolvent estimate of $\rho R_V(\lambda)\rho: L^2(\mathbb R^n)\rightarrow H^4(\mathbb R^n)$ for a given $\rho\in C_0^\infty(\mathbb R^n)$ when $n\geq 3$ is odd. 

\begin{theorem}\label{bound_2}
Let $V(x)\in L^\infty_{\rm comp}(\mathbb R^n, \mathbb C)$ and $n\geq 3$ be odd. Then for any given $\rho\in C_0^\infty(\mathbb R^n)$ satisfying $\rho V = V$, i.e., $\text{supp} (V)\subset \text{supp} (\rho)\subset\subset B_R$, there exists a positive constant $C$ depending on $\rho$ and $V$ such that
\begin{align}\label{bound_3}
\|\rho R_V(\lambda)\rho\|_{L^2(B_R)\rightarrow H^j(B_R)}\leq C|\lambda|^{\frac{-2 + j}{4}} \big(e^{2R(\Re\sqrt[4]{\lambda})_-}+ 
e^{2R(\Im\sqrt[4]{\lambda})_-}\big),\quad j = 0, 1, 2, 3, 4,
\end{align}
where $\lambda\in \Omega_\delta$. Here $\Omega_\delta$ denotes the resonance-free region defined as
\begin{align*}
\Omega_\delta:=\Big\{\lambda: {\Im}\sqrt[4]{\lambda}&\geq - A - \delta {\rm log}(1 + |\lambda|^{1/4}), \,{\Re}\sqrt[4]{\lambda} \geq - A - \delta {\rm log}(1 + |\lambda|^{1/4}), |\lambda|^{1/4}\geq C_0\Big\},
\end{align*}
where $A$ and $C_0$ are two positive constants and $\delta$ satisfies $0<\delta<\frac{1}{2R}$. 
\end{theorem}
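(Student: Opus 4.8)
The plan is to reduce the fourth-order problem to two second-order Helmholtz problems by factoring the principal part, and then to incorporate the potential $V$ through an analytic Fredholm argument. Writing $k = \sqrt[4]{\lambda}$ and factoring $\Delta^2 - \lambda = (\Delta - k^2)(\Delta + k^2)$, a partial fraction decomposition gives, for the free biharmonic resolvent $R_0(\lambda) := (\Delta^2 - \lambda)^{-1}$,
\[
R_0(\lambda) = \frac{1}{2k^2}\Big( (-\Delta - k^2)^{-1} - (-\Delta + k^2)^{-1}\Big),
\]
so that $R_0(\lambda)$ is a combination of two free Helmholtz resolvents, one at frequency $k$ and one at frequency $ik$, since $-\Delta + k^2 = -\Delta - (ik)^2$. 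This identity is the source of the two exponential terms in \eqref{bound_3}.

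First I would record the classical cutoff resolvent bounds for the free Helmholtz operator in odd dimension $n\geq 3$, which continue to an entire function with polynomial decay off the real axis: for a cutoff $\rho$ and frequency $\zeta$, $\|\rho(-\Delta - \zeta^2)^{-1}\rho\|_{L^2\to L^2}\lesssim |\zeta|^{-1} e^{2R(\Im\zeta)_-}$, with the corresponding gain of two derivatives. Applying this with $\zeta = k$ produces the factor $e^{2R(\Im\sqrt[4]{\lambda})_-}$, while applying it with $\zeta = ik$ produces $e^{2R(\Re\sqrt[4]{\lambda})_-}$, since $\Im(ik) = \Re k$. Together with the prefactor $|2k^2|^{-1}\sim |\lambda|^{-1/2}$ this yields the case $j=0$ of \eqref{bound_3} for the free resolvent, in fact with room to spare. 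For $j = 1,\dots,4$ I would not differentiate the kernel directly but argue on the equation: if $u = R_0(\lambda)\rho f$, then $(\Delta^2-\lambda)u = \rho f$, interior elliptic regularity gives $\|u\|_{H^4}\lesssim \|\rho f\|_{L^2} + |\lambda|\,\|u\|_{L^2}$, and interpolation between the $L^2$ and $H^4$ bounds distributes the claimed powers $|\lambda|^{j/4}$ across $j = 0,1,2,3,4$.

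To pass from the free operator to $H_V = \Delta^2 + V$, I would use the second resolvent identity in the form $R_V(\lambda) = (I + R_0(\lambda)V)^{-1}R_0(\lambda)$, inserting cutoffs and using $\rho V = V$, so that the operator to be inverted is $I + K(\lambda)$ with $K(\lambda) = \rho R_0(\lambda)\rho V$ acting on $L^2(B_R)$. Since $K(\lambda)$ is compact and depends holomorphically on $\lambda$, the analytic Fredholm theorem yields the meromorphic continuation of $\rho R_V(\lambda)\rho$, with poles exactly at the resonances.

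The main obstacle, and the heart of the argument, is to show that $\Omega_\delta$ contains no resonances and that $(I + K(\lambda))^{-1}$ is polynomially bounded there. A naive Neumann series is unavailable, because in $\Omega_\delta$ the free resolvent grows like $e^{2R(\cdot)_-}$ and $K(\lambda)$ is not small. Instead I would control the inverse through the perturbation determinant and the singular-value counting estimates for $K(\lambda)$, in the spirit of Vodev and of Sjöstrand--Zworski: the free bound above controls the singular values of $K(\lambda)$, hence the growth of $\det(I + K(\lambda))$, and a minimum-modulus Cartan-type estimate then bounds $\|(I + K(\lambda))^{-1}\|$ away from the finitely many zeros of the determinant in each disc, which is precisely what pins down the logarithmic resonance-free region $\Omega_\delta$. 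Combining this with the free estimate of the previous step gives \eqref{bound_3} for $R_V$ at $j=0$, and feeding the result back into the elliptic-regularity and interpolation argument upgrades it to all $j = 0,\dots,4$.
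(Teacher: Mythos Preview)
Your proposal is correct and follows essentially the same route as the paper: the same partial-fraction factorization of $R_0(\lambda)$ into two Helmholtz resolvents, the same free cutoff bounds (the paper cites \cite[Theorem 3.1]{Dyatlov}), and then a perturbative passage to $R_V$ (the paper defers this to \cite[Theorem 3.3]{LYZ}). Your writeup simply spells out the content behind those citations---second resolvent identity, analytic Fredholm, determinant and Cartan-type bounds for the logarithmic resonance-free region, and elliptic regularity plus interpolation for $j>0$---which is exactly the standard machinery those references encode.
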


\begin{proof}

Denote the free resolvent by $R_0(\lambda) =  (-\Delta  - \lambda)^{-1}, \lambda\in\mathbb C.$ Using the following identity
\begin{align}\label{decom}
R_0(\lambda) = (\Delta^2 - \lambda)^{-1} = \frac{1}{2\sqrt{\lambda}}[ (-\Delta - \sqrt{\lambda})^{-1} - (-\Delta + \sqrt{\lambda})^{-1} ]
\end{align}
and \cite[Theorem 3.1]{Dyatlov}, we can prove that when $n\geq 3$ is odd, for each $\rho\in C_0^\infty(\mathbb R^n)$ with ${\rm supp}(\rho)\subset B_R$ and $\lambda\neq 0$
\begin{align}\label{free}
\|\rho R_0(\lambda) \rho\|_{L^2(B_R)\rightarrow L^2(B_R)}\lesssim\frac{1}{\sqrt{\lambda}}\big(e^{2R (\Im\sqrt[4]{\lambda})_-} + e^{2R (\Re\sqrt[4]{\lambda})_-}\big),
\end{align}
where $t_{-}:=\max\{-t,0\}$. Consequently, using \eqref{free} and similar arguments as in the proofs of \cite[Theorem 3.3]{LYZ} we can prove the estimate \eqref{bound_3}.
\end{proof}

\begin{remark}
We discuss the resolvent estimates in even dimensions $n\geq 2$. Since the free resolvent $G_0(\lambda) = (-\Delta - \lambda)^{-1}$ in even dimensions is a convolution operator with
the kernel (see e.g. \cite{FY06})
\begin{align*}
G_0(\lambda) =  \frac{c_n e^{{\rm i} \sqrt{\lambda} |x|}}{|x|^{n - 2}} \int_0^\infty e^{-t} t^{\frac{n - 3}{2}} \Big( \frac{t}{2} - {\rm i}\sqrt{\lambda} |x| \Big)^{\frac{n - 3}{2}} {\rm d}t,
\end{align*}
where $c_n$ is a positive constant depending on the dimension $n$. Then by \eqref{decom} and a direct calculation we have 
\begin{align*}
|G_0(\lambda)| \lesssim \frac{|\lambda|^{\frac{n - 3}{4}}}{|\lambda|} \big(e^{(\Im\sqrt{\lambda})_-|x|} + e^{(\Re\sqrt{\lambda})_-|x|}\big) \lesssim \frac{1}{|\lambda|^{1 - \frac{n - 3}{4}}} e^{(\Im\sqrt{\lambda})_-|x|},
\end{align*}
which implies that only when $1 - \frac{n - 3}{4}>0$, by repeating the above arguments we may obtain similar resolvent estimates for even dimensional cases.
\end{remark}

\end{document}